\documentclass[reqno]{amsart}
\usepackage[utf8]{inputenc}
\usepackage{amsmath, amssymb, amsthm, epsfig}
\usepackage{hyperref, latexsym}
\usepackage{url}
\usepackage[mathscr]{euscript}
\usepackage{mathrsfs}
\usepackage{ dsfont }
\usepackage{color}
\usepackage{fullpage} 
\usepackage{setspace}
\usepackage[left=3cm, right=3cm, bottom=3cm]{geometry}     
\usepackage{verbatim} 
\usepackage{bbm}

\onehalfspacing




\theoremstyle{plain}
\newtheorem{theorem}{Theorem}
\newtheorem{corollary}[theorem]{Corollary}
\newtheorem{lemma}[theorem]{Lemma}

\theoremstyle{definition}
\newtheorem{definition}[theorem]{Definition}

\numberwithin{equation}{section}
\newtheorem*{theorem*}{Theorem} 

\newcommand{\Z}{{\mathbb Z}}
\newcommand{\R}{{\mathbb R}}
\newcommand{\N}{{\mathbb N}}
\newcommand{\C}{{\mathbb C}}


\def\XXint#1#2#3{{\setbox0=\hbox{$#1{#2#3}{\int}$}
\vcenter{\hbox{$#2#3$}}\kern-.5\wd0}}

\newcommand{\dx}{\, \mathrm{d}x}

\newcommand{\pa}[1]{\left(#1\right)}
\newcommand{\abs}[1]{\left\vert#1\right\vert}
\newcommand{\suc}[1]{\left\{#1\right\}}
\newcommand{\set}[2]{\left\{#1 :#2\right\}}

\renewcommand{\dx}{\,\text{\rm d}x}

\DeclareMathOperator{\essinf}{ess\,inf}

\providecommand{\abs}[1]{ \lvert#1  \rvert}
\providecommand{\norm}[1]{ \lVert#1  \rVert}

\newcommand{\sinc}{\mathrm{sinc}}

\raggedbottom

\title{Dynamical sampling, derivatives, and interpolation formulas in the Paley--Wiener space}

\author[Gardeazabal]{Iker Gardeazabal-Gutiérrez}
\author[Sousa]{Mateus Sousa}
\address{BCAM - Basque Center for Applied Mathematics, Alameda de Mazarredo 14, 48009 Bilbao, Bizkaia, Spain}
\email{iker.gardeazabal@ehu.eus} 
\email{mcosta@bcamath.org}

\subjclass[2020]{42C10}
\keywords{Paley-Wiener spaces, frames, interpolation, derivatives}

\setlength{\parskip}{0.5em}

\allowdisplaybreaks

\begin{document}

\begin{abstract}  
In this paper we present a criteria to obtain interpolation formulas  in terms of the sequence $\pa{\suc{T_n(f)(Nm)}_{m\in\Z}}_{n=1}^N$, where $f$ is a function whose Fourier transform is supported in $[-1/2,1/2]$, and $T_n$ are certain Fourier multiplier operators. We also discuss applications and also prove that our results recover several classical formulas. 
\end{abstract}

\maketitle

\section{Introduction}

Recovering bandlimited functions by sampling their values over a discrete set is a classical problem in analysis. One of the most basic results in this direction is the Shannon-Whittaker interpolation theorem for Paley-Wiener spaces. We denote by $PW_{2\pi\delta}$ the Paley-Wiener space of type $2\pi\delta$, which consists of the functions $f\in L^2(\R)$ whose Fourier transform\footnote{Here we normalize the Fourier transform by $\widehat{f}(\xi)=\int_\R f(x)e^{-2\pi ix\cdot \xi}\dx$} $\widehat{ f}$ is supported in $ [-\delta,\delta]$. The Shannon-Whittaker theorem says for every $f\in PW_\pi$ and $x\in \R$ one has
\begin{align}\label{eq:12/07/2018 01:32}
    f(x)=\sum_{n\in \Z}f(n)\sinc(x-n),
\end{align}
where 
$$\sinc(x)=\frac{\sin(\pi x)}{\pi x},$$
is the cardinal sine function.

The series in \eqref{eq:12/07/2018 01:32} has two interpretations. The limit happens in the $L^2$ sense, i.e, 
\begin{align*}
    \lim_{N\rightarrow\infty}\|f-\sum_{|n|\leq N}f(n)\sinc(\cdot-n)\|_2= 0.
\end{align*}
In fact, the set of functions $\{\sinc(\cdot-n)\}_{n\in\Z}$ forms an orthonormal system, in particular this implies
\begin{align*}
    \|f\|_2^2=\sum_{n\in\Z}|f(n)|^2.
\end{align*}
On the other hand, by the Paley-Wiener theorem, $PW_{2\pi\delta}$ is equivalent to the space of entire functions of exponential type\footnote{We say a entire function $f$ has exponential type $\tau$ if $\limsup_{|z|\rightarrow\infty}\tfrac{\log|f(z)|}{|z|}\leq \tau$} $2\pi\delta$ whose restriction to $\R$ is in $L^2(\R)$, and one has  
\begin{align}\label{eq:12/07/2018 02:32}
        \lim_{N\rightarrow\infty}{\sup_{z \in K}\left|f(z)-\sum_{|n|\leq N}f(n)\sinc(z-n)\right|}=0\, ,
\end{align}
for any compact set $K\subset \C$. This means that one can completely rebuild a function with Fourier transform supported in $[-1/2,1/2]$ by knowing its values over $\Z$.

By a simple dilation argument, one can see that for functions in $PW_{2\pi\delta}$ a similar conclusion holds: one can completely recover a function by knowing its values over $\tfrac{1}{2\delta}\Z$. This means that if a function has a bigger exponential type, one needs a denser dilation of $\Z$ to reconstruct the functions. In particular, for $PW_{2\pi}$ one needs the information over $\tfrac{1}{2}\Z$ to determine the functions, i.e., by doubling the exponential type, one doubles the information needed. In \cite{Va1985}, Vaaler studied the problem of determining a function $f\in PW_{2\pi}$ by knowing $\{(f(n),f'(n))\}_{n\in\Z}$ and obtained the following interpolation result: 
if $f\in PW_{2\pi}$ then 
\begin{align}\label{eq:Vaaler_limit}
    f(z)=\frac{1}{\pi^2}\sum_{n\in\Z}f(n)\frac{(\sin(\pi z))^2}{(z-n)^2}+\sum_{n\in\Z}f'(n)\frac{(\sin(\pi z))^2 }{(z-n)},
\end{align}
where the expression in the right-hand side of \eqref{eq:Vaaler_limit} converges uniformly in compact sets of $\C$ (see \cite[Theorem 9]{Va1985}).

This means one can reconstruct a $L^2$ function of exponential type $2\pi$ by replacing half the information needed about the values of $f$ with information about the values of $f'$, and this recovers the function in the pointwise sense. Although the system of interpolating functions in \eqref{eq:Vaaler_limit} is not orthogonal as in the  case of integer translations of the cardinal sine, it was later observed by Gonçalves \cite[Corollary 2]{G2015} that there are constants $c,C>0$ such that
\begin{align}\label{eq:12/07/2018 14:20}
    c\int_\R |f(x)|^2\dx \leq\sum_{n\in\Z}|f(n)|^2+\sum_{n\in\Z}|f'(n)|^2 \leq C\int_\R |f(x)|^2\dx,
\end{align}
for every $f\in PW_{2\pi}$. This implies the limit in \eqref{eq:Vaaler_limit} also happens in the $L^2$ sense.

In \cite{Li2003}, Littmann obtained a more general version of this kind of interpolation formula where for any $f\in PW_{N\pi}$, $N\in\N$, the following formula holds
\begin{equation}\label{Lit1}
    f(x)=\sum_{k=0}^{N-1} \sum_{m\in\Z} f^{(k)}(m) P^N_{N-k-1}\pa{\pi(x-m)} \frac{(x-m)^k}{k!} \pa{\sinc(x-m)}^N,
\end{equation}
where $P^N_n$ is the Taylor polynomial of degree $n$ of $\pa{\frac{x}{\sin(x)}}^N$ centered in 0. As in the Vaaler case, these formulas are accompanied by a norm equivalence, in this case, Gonçalves and Littmann obtained in \cite{GL2015} that there exist constants $c_N,C_N>0$ such that for every $f\in PW_{N\pi}$
\begin{equation}\label{Lit2}
    c_N\int_\R |f(x)|^2\dx \leq\sum_{k=0}^{N-1}\sum_{m\in\Z}|f^{(k)}(m)|^2 \leq C_N\int_\R |f(x)|^2\dx.
\end{equation}
Results like that are examples of the so-called dynamical sampling. Given a separable Hilbert space $H$, an operator $A:H\rightarrow H$, a family of vectors $\{f_n\}_{n\in I}$, where $I\subset \Z$, and a 
sequence $\{l_n\}_{n\in I}$ where each $l_n\in \Z_+$, a dynamical sampling question consists of asking if the collection $\{A^{i}f_n,\,i=0,\cdots,l_n\}_{n\in I}$ forms a complete system, a Riesz basis, a frame, or a Bessel sequence; see \cite{ACMT2014,ADK2013,CMPP2017} and the references within for more on the topic of dynamical sampling. 

If one considers $A=\frac{d}{dx}:PW_{N\pi}\rightarrow PW_{N\pi}$, and take $f_n^N(x)=N\sinc(N(x-n))$, 
one has
$$(-1)^j f^{(j)}(n) = (-1)^j \langle A^j f, f^N_n \rangle =  \langle f, A^j f^N_n \rangle.$$
Therefore equation \eqref{Lit2} implies the family of functions $\{A^j f^N_n\}$, with $j=0,...,N-1$ 
and $n\in\Z$, forms a frame. Moreover, it follows from the interpolation formula \eqref{Lit1} that 
$\{A^j f^N_n\}$ also forms a Riesz basis. So questions about interpolation and sampling of functions using the values of its derivatives in Paley-Wiener spaces are essentially dynamical sampling problems. 

{Interpolation formulas are a classical and useful tool for optimization problems (see \cite{Va1985}), and recently have garnered new interest due to the breakthroughs in sphere packing by Maryna Viazovska and her co-authors \cite{CKMRV17,Via17}, and the subsequent developments involving sign uncertainty principles (see \cite{CG19}) and universal optimality (see \cite{CKMRV22}). We refer to the survey \cite{Cohn24} for an account of the recent developments in Fourier interpolation.}

\subsection{Main results}

The aim of this paper is to generalize the interpolation formulas mentioned in the introduction. To 
be more precise, we will see under which conditions an interpolation formula for $f\in PW_\pi$ depending on 
$\pa{\suc{T_n(f)(Nm)}_{m\in\Z}}_{n=1}^N$ is possible or not, 
where $N\in\N$ and the $T_n$ are Fourier multiplier operators, meaning that
$$T_n(f)(x)=\int_{-\infty}^\infty \widehat{f}(\xi) K_n(\xi) e^{2\pi i x \xi} d\xi.$$
In what follows we will assume that the Fourier multipliers $K_n$ are essentially bounded in the interval $[-1/2,1/2]$, hence these operators send the space $PW_{\pi}$ into itself. Our main result is a criteria for such formulas to hold in terms of invertibility of certain matrices.   


\begin{theorem}\label{theo}
    Let $N\in\N$ and $T=\pa{T_n}_{n=1}^N$ where, for each $n=1,...,N$, $T_n$ is a Fourier multiplier operator associated 
    to a bounded  multiplier $K_n$. Let us define for $x\in\pa{\frac{N-2}{2},\frac{N}{2}}$ the $N \times N$ matrix
	$$\mathcal{M}_T(x)=\pa{\frac{1}{N} K_n\pa{\frac{m-1-x}{N}}}_{m,n=1}^N.$$
	Then the following are equivalent:
	
%
    1) $\essinf_{x\in \pa{\frac{N-2}{2},\frac{N}{2}}} \abs{\det \mathcal{M}_T(x)}>0$,
	
	2) There exists, for $n=1,...,k$, functions $g^T_n\in PW_\pi$ such that
	$$f(x)=\sum_{n=1}^N \sum_{m\in\Z} T_n(f)(Nm) g^T_n(x-Nm),$$
	for all $f\in PW_\pi$, where the convergence holds both in $L^2$ sense and uniformly.
	
	Furthermore, the functions $g^T_n$ appearing in 3) can be uniquely determined by either of the following 
	characterizations:
	
	i) For each $n=1,...,N$, the function $g^T_n$ is the unique function in $PW_\pi$ satisfying
	$$T_m(g^T_n)(Nj)=\delta_{n,m}\delta_{j,0},$$
	for all $m=1,...,N$ and $j\in\Z$,
	
	ii) The functions $g^T_n$, $n=1,...,N$, are the unique functions in $PW_\pi$ satisfying
	$$\pa{\widehat{g^T_n}\pa{\frac{j-1-\xi}{N}}}_{n,j=1}^N=\pa{\mathcal{M}_T(\xi)}^{-1},$$
	for almost every $\xi\in\pa{\frac{N-2}{2},\frac{N}{2}}$.
\end{theorem}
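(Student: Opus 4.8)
The plan is to transport the whole statement, via the Fourier transform together with a ``polyphase'' (periodization) decomposition, into a pointwise‑in‑$x$ linear‑algebra statement about the matrix $\mathcal M_T(x)$, and then read off the three equivalences. For $f\in PW_\pi$ put $u=\widehat f\in L^2([-1/2,1/2])$; split $[-1/2,1/2]$ into the $N$ subintervals $I_j=\pa{\tfrac{j-1}{N}-\tfrac12,\ \tfrac{j}{N}-\tfrac12}$ and on $I_j$ use the substitution $\xi=\tfrac{j-1-x}{N}$ with $x$ in the length‑one interval $J=\pa{\tfrac{N-2}{2},\tfrac N2}$. This gives a unitary isomorphism (up to a factor $\sqrt N$) $u\mapsto F_u$, $F_u(x)=\pa{u\pa{\tfrac{j-1-x}{N}}}_{j=1}^N$, from $L^2([-1/2,1/2])$ onto $L^2(J;\C^N)$. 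Since $e^{2\pi i Nk\xi}=e^{2\pi i k(j-1)}e^{-2\pi i kx}=e^{-2\pi i kx}$ on $I_j$, a direct computation (in which the Jacobian $\tfrac1N$ cancels the $\tfrac1N$ in $\mathcal M_T$) shows that $T_n(f)(Nk)$ is the $k$‑th Fourier coefficient over $J$ of $x\mapsto\pa{\mathcal M_T(x)^{\top}F_u(x)}_n$. In the same coordinates, for $g_n\in PW_\pi$ the function $\sum_m T_n(f)(Nm)\,g_n(\cdot-Nm)$ corresponds to $x\mapsto\pa{\widehat{g_n}\pa{\tfrac{j-1-x}{N}}}_j\,\pa{\mathcal M_T(x)^{\top}F_u(x)}_n$ (the sum over $m$ being a Fourier series), so, summing over $n$, the right‑hand side of the formula in 3) corresponds to
\[
x\longmapsto \widetilde G(x)^{\top}\mathcal M_T(x)^{\top}F_u(x),\qquad \widetilde G(x)_{n,j}:=\widehat{g_n}\pa{\tfrac{j-1-x}{N}},
\]
with $\widetilde G$ exactly the matrix in ii). Hence the identity in 3) is equivalent to $\mathcal M_T(x)\widetilde G(x)=I$ for a.e.\ $x\in J$.

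The equivalence $1)\Leftrightarrow 2)$ is then pure linear algebra: $\mathcal M_T$ lies in $L^\infty(J;M_N(\C))$ because the $K_n$ are bounded, and the adjugate of $\mathcal M_T$ has bounded entries (polynomials in those of $\mathcal M_T$), so $\abs{\det\mathcal M_T}>\epsilon$ a.e.\ forces $\mathcal M_T^{-1}\in L^\infty$, while conversely $\|\mathcal M_T^{-1}\|\le C$ a.e.\ forces $\abs{\det\mathcal M_T}\ge C^{-N}$ a.e. For $2)\Rightarrow 3)$, define $g_n\in PW_\pi$ by letting $\widehat{g_n}$ be the function assembled (along the $I_j$'s) from the $n$‑th row of $\mathcal M_T(x)^{-1}$; these are genuine Paley--Wiener functions since $\mathcal M_T^{-1}\in L^\infty(J)\subset L^2(J)$, and with $\widetilde G=\mathcal M_T^{-1}$ the dictionary above gives that the right‑hand side of the formula equals $f$. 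Boundedness of $\widehat{g_n}$ makes $\suc{g_n(\cdot-Nm)}_{n,m}$ a Bessel system, so the series converges unconditionally in $L^2$; and since $\|h\|_\infty\le\|\widehat h\|_1\le\|\widehat h\|_2=\|h\|_2$ on $PW_\pi$, $L^2$‑convergence upgrades automatically to uniform convergence on $\R$ (and on compacta of $\C$). Characterization i) follows because, by the dictionary, $T_m(g_n)(Nj)$ is the $j$‑th Fourier coefficient of $\pa{\mathcal M_T^{\top}\widetilde G^{\top}e_n}_m=\pa{(\widetilde G\mathcal M_T)^{\top}e_n}_m=\delta_{n,m}$, a constant; ii) is the defining relation of $\widetilde G$; and uniqueness in either holds because two admissible choices differ by a $g$ with $\mathcal M_T(x)^{\top}F_{\widehat g}(x)\equiv0$, whence $F_{\widehat g}=0$ by invertibility.

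It remains to prove $3)\Rightarrow 2)$, which I expect to be the main obstacle. Running the dictionary backwards (pass to a.e.-convergent subsequences of the partial sums and of their $L^2$‑limit) identifies the Fourier side of the series with $x\mapsto\widetilde G(x)^{\top}\mathcal M_T(x)^{\top}F_u(x)$, so the formula gives $\widetilde G(x)^{\top}\mathcal M_T(x)^{\top}F_u(x)=F_u(x)$ a.e.; testing against a countable family of elementary profiles (e.g.\ $F=e_k\one_B$ with $B\subset J$ a rational interval) upgrades this to $\mathcal M_T(x)\widetilde G(x)=I$ for a.e.\ $x$, which already yields $\det\mathcal M_T\neq0$ a.e.\ and the uniqueness in i), ii). The remaining delicate point is to promote this a.e.\ invertibility to the quantitative bound in 2), equivalently to $\widetilde G\in L^\infty$, equivalently to the Bessel property of $\suc{g_n(\cdot-Nm)}_{n,m}$. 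For this one uses that the expansion in 3) converges unconditionally (the natural reading, matching the classical frame expansions recalled in the introduction): then $\sup_{M,\varepsilon}\big\|\sum_{n,\,\abs{m}\le M}\varepsilon_{n,m}T_n(f)(Nm)g_n(\cdot-Nm)\big\|_2<\infty$ for each $f$, so by the uniform boundedness principle the signed partial‑sum operators are uniformly bounded on $PW_\pi$; combining this with the relation $\widetilde G^{\top}=(\mathcal M_T^{\top})^{-1}$ just obtained and with the (automatic) Bessel property of the analysis system $\suc{\phi_{n,m}}$ given by $\langle f,\phi_{n,m}\rangle=T_n(f)(Nm)$ --- Bessel because $K_n\in L^\infty$ and $\suc{e^{-2\pi i Nm\xi}\one_{[-1/2,1/2]}}_{m}$ is orthonormal in $L^2([-1/2,1/2])$ --- one extracts $\widetilde G=\mathcal M_T^{-1}\in L^\infty$, i.e.\ 1), hence 2). (The unconditional reading is essential here: with mere symmetric partial sums one can already have $3)$ while $\mathcal M_T$ vanishes at a point, as soon as $1/\abs{\mathcal M_T}^2$ is a Muckenhoupt $A_2$ weight.)
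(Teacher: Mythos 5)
Your polyphase dictionary is exactly the paper's Lemma \ref{lem2} (periodization along the branches $I_j$), and the parts of your argument built on it --- the equivalence $1)\Leftrightarrow 2)$ via the adjugate, the implication $2)\Rightarrow 3)$ by taking $\widehat{g^T_n}$ to be the rows of $\mathcal{M}_T^{-1}$ (Bessel, hence $L^2$ convergence, upgraded to uniform convergence by $\norm{h}_\infty\le\norm{\widehat h}_{L^1}\le\norm{\widehat h}_{L^2}$ on $[-1/2,1/2]$), and the characterizations i)--ii) with their uniqueness --- reproduce the paper's second (Vaaler-style) proof and are fine. The issue is the converse $3)\Rightarrow 1),2)$, which you yourself single out as the main obstacle, and there your proposal does not close.

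Two concrete points. First, you only argue the converse after replacing the stated hypothesis by \emph{unconditional} convergence of the double series. Your parenthetical is, as far as I can check, correct: for $N=1$, $K(\xi)=\abs{\xi}^{1/4}$ and $\widehat g(\xi)=\abs{\xi}^{-1/4}\in L^2\setminus L^\infty$, the weight $\abs{\xi}^{-1/2}$ is $A_2$, so by Hunt--Muckenhoupt--Wheeden the symmetric partial sums converge in $L^2$ (and, by Cauchy--Schwarz on $[-1/2,1/2]$, uniformly) for every $f\in PW_\pi$, while $\det\mathcal{M}_T$ is not bounded below. So under the symmetric-partial-sum reading you are proving a \emph{variant} of the statement, not the statement; you must say explicitly which reading of the convergence in 3) you adopt. (For comparison, the paper's first proof at this point asserts that the formula yields a bounded inverse of the block Laurent operator $A_T$ on $(\ell^2)^N$, and its second proof stops at the a.e.\ identity $\mathcal{M}_T\widetilde G=Id$, i.e.\ characterization ii); neither confronts the $L^\infty$ bound in 1) directly, so your scrutiny is warranted --- but it does not complete your own argument.) Second, even under the unconditional reading, the decisive sentence ``one extracts $\widetilde G=\mathcal{M}_T^{-1}\in L^\infty$'' is an assertion, not a proof: the uniform boundedness principle applied to the signed/truncated partial-sum operators only bounds the composites $f\mapsto\sum_{(n,m)\in F}\lambda_{n,m}T_n(f)(Nm)\,g_n(\cdot-Nm)$, which in polyphase coordinates controls quantities of the form $\langle\text{$n$-th column of }\mathcal{M}_T(x),F_u(x)\rangle\cdot(\text{$n$-th row of }\widetilde G)$, not $\widetilde G$ itself; and the Bessel property of the analysis system does not help, because the analysis map fails to be onto $(\ell^2)^N$ precisely in the situation you must exclude. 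One way to finish is to test with $f$ whose polyphase vector is supported on a single branch $j_0$, localized near a point where $\abs{K_n}$ is comparable to its essential supremum, and to use the modulations $\lambda_{n,m}=e^{2\pi iNm\theta}$ to translate the periodized data against $\widetilde G$; this yields $\norm{\widetilde G_{n,\cdot}}_{L^\infty}\norm{K_n}_{L^\infty}\lesssim N$ times the UBP constant, hence 1). Until a step of this kind is supplied, the implication $3)\Rightarrow 1)$ in your proposal remains unproved.
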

As in the aforementioned interpolation formulas available in the literature, a standard application of the open mapping theorem also reveals the following frame bounds.
\begin{corollary} \label{cor}
    Let $N\in\N$ and $T=\pa{T_n}_{n=1}^k$ where, for each $n=1,...,N$, $T_n$ is a Fourier multiplier operator associated 
    to a bounded multiplier $K_n$. Then, if any of the equivalent statements 1), 2) or 3) from Theorem \ref{theo} holds 
    then there exists two positive constants $c_T$ and $C_T$ such that
    \begin{equation*}
    c_T \int_\R \abs{f(x)}^2 dx \leq \sum_{n=1}^N \sum_{m\in\Z} \abs{T_n(f)(Nm)}^2
    \leq C_T \int_\R \abs{f(x)}^2 dx,    
    \end{equation*} 
    for all $f\in PW_\pi$.
\end{corollary}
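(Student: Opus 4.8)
The plan is to run the standard argument --- the same one behind the norm equivalence \eqref{Lit2} --- that promotes the interpolation formula into two-sided frame bounds via the open mapping theorem. Put $H=PW_\pi$, let $\ell^2=\ell^2\pa{\{1,\dots,N\}\times\Z}$, and let $A\colon H\to\ell^2$ be the analysis operator $Af=\pa{T_n(f)(Nm)}_{n,m}$. The middle quantity in the Corollary is $\norm{Af}_{\ell^2}^2$, so the assertion is exactly that $A$ is bounded and bounded below, i.e.\ that the sampling system $\{h_{n,m}\}$ reproducing the functionals $f\mapsto T_n(f)(Nm)$ --- explicitly, $\widehat{h_{n,m}}(\xi)=\overline{K_n(\xi)}\,e^{-2\pi iNm\xi}$ --- is a frame for $H$ with bounds $c_T$, $C_T$.

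For the upper bound no hypothesis beyond $K_n\in L^\infty\pa{[-1/2,1/2]}$ is needed. For each $n$ the function $h(x):=T_n(f)(Nx)$ lies in $PW_{N\pi}$ and $\norm h_{L^2(\R)}=N^{-1/2}\norm{T_nf}_2\le N^{-1/2}\norm{K_n}_{L^\infty}\norm f_2$, so the Plancherel--P\'olya inequality for $PW_{N\pi}$ gives $\sum_{m}\abs{T_n(f)(Nm)}^2\le C_N N^{-1}\norm{K_n}_{L^\infty}^2\norm f_2^2$ for a constant $C_N$ depending only on $N$; summing in $n$ yields the upper bound. Alternatively, one can use the polyphase identity
\[
\sum_{n=1}^N\sum_{m\in\Z}\abs{T_n(f)(Nm)}^2=\int_{(N-2)/2}^{N/2}\norm{\mathcal M_T(x)^{\top}\mathbf w_f(x)}_{\C^N}^2\dx ,
\]
where $\mathbf w_f(x):=\pa{\widehat f\pa{\tfrac{m-1-x}{N}}}_{m=1}^N$, which holds because, as $x$ ranges over $\pa{\tfrac{N-2}{2},\tfrac N2}$, the $N$ frequency windows $\big\{\tfrac{m-1-x}{N}\big\}$, $m=1,\dots,N$, tile $(-1/2,1/2)$; combined with $\norm f_2^2=\tfrac1N\int_{(N-2)/2}^{N/2}\norm{\mathbf w_f(x)}_{\C^N}^2\dx$ this bounds the left side by $N\esssup_x\norm{\mathcal M_T(x)}^2\norm f_2^2<\infty$, since $\mathcal M_T$ has essentially bounded entries.

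For the lower bound I would invoke Theorem \ref{theo}: the hypothesis yields statement 3), i.e.\ $f=\sum_{n,m}T_n(f)(Nm)\,g^T_n(\cdot-Nm)=S(Af)$, where $S\colon\ell^2\to H$ is the synthesis operator of the reconstruction functions $g^T_n$. By characterization ii) the transforms $\widehat{g^T_n}$ are assembled from the entries of $\pa{\mathcal M_T}^{-1}$, which by statement 1) lie in $L^\infty\pa{[-1/2,1/2]}$; the same periodization computation as above (with a Cauchy--Schwarz on each polyphase fibre) then shows $\suc{g^T_n(\cdot-Nm)}_{n,m}$ is a Bessel system and $S$ is bounded, with $\norm S\le\pa{\sum_{n}\norm{\widehat{g^T_n}}_{L^\infty}^2}^{1/2}$. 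Since $S$ is a bounded left inverse of $A$, the operator $A$ is injective with closed range, hence a bounded bijection $H\to\mathrm{ran}\,A$ of Hilbert spaces whose inverse is bounded by the open mapping theorem; concretely $\norm f_2=\norm{S(Af)}_2\le\norm S\,\norm{Af}_{\ell^2}$, which is the lower bound with $c_T=\norm S^{-2}$.

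I do not expect a real obstacle --- that is why this is a corollary. The only point at which a hypothesis is actually consumed is the boundedness of $S$ (equivalently, the closedness of $\mathrm{ran}\,A$, equivalently the lower frame bound), and this is consumed precisely through the $L^\infty$-invertibility in statement 1) of Theorem \ref{theo}. If one wants a fully self-contained route, the polyphase identity together with the surjectivity of $f\mapsto\mathbf w_f$ onto $L^2\pa{(\tfrac{N-2}{2},\tfrac N2);\C^N}$ identifies the sharp constants as $C_T=N\pa{\esssup_x\norm{\mathcal M_T(x)}}^2$ and $c_T=N\pa{\esssup_x\norm{\mathcal M_T(x)^{-1}}}^{-2}$, both finite and positive under statement 1); Cramer's rule then converts the determinant lower bound of statement 2) into the bound on $\norm{\mathcal M_T^{-1}}$, which is exactly the quantitative equivalence already used in the proof of Theorem \ref{theo}.
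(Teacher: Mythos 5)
Your proof is correct, but it takes a somewhat different route from the paper's. The paper deduces the corollary in two lines from the machinery already set up in its first proof of Theorem \ref{theo}: the operator $A_T$ maps the plain sample data $\pa{\suc{f(Nm+n-1)}_{m\in\Z}}_{n=1}^N$ to $\pa{\suc{T_n(f)(Nm)}_{m\in\Z}}_{n=1}^N$, and under 1)--3) it is bounded with bounded inverse (open mapping theorem), so $\norm{(A_T)^{-1}}^{-1}\sum_{m}\abs{f(m)}^2\leq\sum_{n,m}\abs{T_n(f)(Nm)}^2\leq\norm{A_T}\sum_{m}\abs{f(m)}^2$, and one concludes with the exact Parseval identity $\sum_{m\in\Z}\abs{f(m)}^2=\int_\R\abs{f(x)}^2\dx$ for $PW_\pi$. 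You instead work directly with the analysis operator on $PW_\pi$: the upper bound via Plancherel--P\'olya (or via your polyphase identity, which is just Lemma \ref{lem2} with $a=0$ followed by Parseval on the unit-length interval $\pa{\frac{N-2}{2},\frac{N}{2}}$), and the lower bound via boundedness of the synthesis operator built from the functions $g^T_n$ of statement 3), whose transforms are essentially bounded by characterization ii) together with the $L^\infty$ bound on $\pa{\mathcal{M}_T}^{-1}$ from statement 1). Both arguments are sound and consume the hypotheses in the same place (the two-sided $L^\infty$ control of the symbol matrix); yours avoids pivoting through the integer samples of $f$ and has the added value of exhibiting explicit admissible constants $C_T=N\pa{\esssup_x\norm{\mathcal{M}_T(x)}}^2$ and $c_T=N\pa{\esssup_x\norm{\pa{\mathcal{M}_T(x)}^{-1}}}^{-2}$, while the paper's version is shorter because $A_T$ and its invertibility were already in hand and the samples $\suc{f(m)}_{m\in\Z}$ satisfy an exact norm identity rather than mere frame bounds.
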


\subsection{Stable sampling and interpolation set}

We will now consider a more general case, where it will not be possible to obtain an interpolation formula, but we will be able to get something weaker that still maintains some of its characteristics, to be more precise we consider the following definition. 

\begin{definition}
	Let $T=\pa{T_n}_{n=1}^N$ be a collection of $N$ Fourier multiplier operators and $\rho>0$. We say that $\pa{\rho\Z,T}$ is a set of stable sampling for $PW_{\delta\pi}$ if
	$$\norm{f}_{L^2(\R)}^2 \lesssim \sum_{n=1}^N \sum_{k\in\Z} \abs{T_n(f)(\rho k)}^2 \lesssim \norm{f}_{L^2(\R)}^2$$
	for all $f\in PW_{\delta\pi}$. We say that $\pa{\rho\Z,T}$ is an interpolation set for $PW_{\delta\pi}$ if the interpolation problem
	$$T_n(f)(\rho k)=a^n_k, \quad n=1,...,N, \ k\in\Z$$
	has a solution on $PW_{\delta\pi}$ for any $(\{a^n_k\}_{k\in\Z})_{n=1}^N\in \pa{\ell^2}^N$.
\end{definition}

We will now present a result that aims to see when $(\rho\Z,T)$ is, or is not, a set of stable sampling or interpolation set for $PW_{\delta\pi}$, using as a condition the values of the determinate of a matrix of functions, in a similar way the the main theorem. It should be noted that this result is a generalization of Theorem 1.6 in \cite{GS}, where the authors considered this problem in the case where the Fourier multiplier operators considered are of the form $T_{nL+s}(f)(x)=f^{(n-1)}(x_m+x)$, for $m=1,...,L$ and $n=1,....,r$.

\begin{theorem} \label{theo2}
	Let $N\in\N$, $\delta,\rho>0$ and $T=\pa{T_n}_{n=1}^N$ where, for each $n=1,...,N$, $T_n$ is a Fourier multiplier operator associated to the bounded multiplier $K_n$. Let us define for $x\in\pa{\frac{N-2}{2},\frac{N}{2}}$ the $N\times N$ matrix
	$$M^\rho_T(x):=\pa{\frac{1}{N} K_n\pa{\frac{m-1-x}{\rho}}}_{n,m=1}^N.$$
	Then we have that: \\
	
	1) If $\essinf_{x\in\pa{\frac{N-2}{2},\frac{N}{2}}} M^\rho_T(x)>0$ then:
	
	$\quad \bullet$ $\pa{\rho\Z,T}$ is a set of stable sampling for $PW_{\delta\pi}$ if and only if $\rho\delta\leq N$,
	
	$\quad \bullet$ $\pa{\rho\Z,T}$ is an interpolation set for $PW_{\delta\pi}$ if and only if $\rho\delta\geq N$.
    	
	2) If $\essinf_{x\in\pa{\frac{N-2}{2},\frac{N}{2}}} M^\rho_T(x)=0=\abs{\set{x\in\pa{\frac{N-2}{2},\frac{N}{2}}}{M^\rho_T(x)=0}}$ then:

    $\quad \bullet$ $\pa{\rho\Z,T}$ is a set of stable sampling for $PW_{\delta\pi}$ if $\rho\delta\leq N$,
	
	$\quad \bullet$ $\pa{\rho\Z,T}$ is not an interpolation set for $PW_{\delta\pi}$ if $\rho\delta\leq N$. 
    
	3) If $\abs{\set{x\in\pa{\frac{N-2}{2},\frac{N}{2}}}{M^\rho_T(x)=0}}> 0$ then:

    $\quad \bullet$ $\pa{\rho\Z,T}$ is not a set of stable sampling for $PW_{\delta\pi}$ if $\rho\delta\geq N$, 
	
	$\quad \bullet$ $\pa{\rho\Z,T}$ is not an interpolation set for $PW_{\delta\pi}$ if $\rho\delta\leq N$. 
\end{theorem}

\section{Proofs of Theorems}

\subsection{Preliminary lemmas} We shall carry out 2 proofs of Theorem \ref{theo}, and we begin by presenting two auxiliary lemmas which will be useful for both proofs. The first lemma gives us necessary and sufficient conditions for the injectivity and surjectivity of operators defined by matrices of bounded functions.

\begin{lemma}\label{lem1}
	Let $M$ be a $N\times N$ matrix whose entries are $L^\infty(a,b)$ functions, and its associated linear operator
	\begin{align*}
		O_M: \pa{L^2(a,b)}^N &\to \pa{L^2(a,b)}^N \\
		(f_n)_{n=1}^N &\to M (f_n)_{n=1}^N.
	\end{align*}
	Then we have that: 
	
	- $O_M$ is injective if and only if $M(x)\neq 0$ for almost every $x\in (a,b)$,
	
	- $O_M$ is surjective if and only if $\essinf_{x\in (a,b)}\abs{\det M(x)}>0$,
	
	- $O_M$ is invertible if and only if $\essinf_{x\in (a,b)}\abs{\det M(x)}>0$.
\end{lemma}

\begin{proof}[Proof of Lemma \ref{lem1}]
	For this proof we will use that the injectivity and surjectivity of an operator is equivalent to the existence of a left and right inverse, respectively. It is clear the the only candidate for a left or right inverse is the operator associated to the matrix of functions that is equal to the inverse of $M$ for almost every $x\in (a,b)$. Due to this, the condition $M(x)\neq 0$ for almost every $x\in (a,b)$ is necessary for the existence of the left and right inverse. Note that this ``inverse", without further assumptions, will send $O_M(L^2(a,b))$ into $L^2(a,b)$, which is enough for it to be the left inverse, therefore proving the sufficiency of the condition for the first point. In the case of the right inverse, we will need it to send $L^2(a,b)$ into itself or in other words that the entries of its associated matrix are functions in $L^\infty(a,b)$, which is equivalent to $\essinf_{x\in (a,b)}\abs{\det M(x)}>0$. Lastly, the third point is a consequence of the first two. \\
\end{proof}
The second lemma introduces us to some periodic functions that will be of great use in both proofs of the theorem.
\begin{lemma}\label{lem2}
    Let $T$ be a Fourier multiplier operator associated to a bounded and measurable multiplier $K$, $f$ a function 
    in $PW_\pi$, $N\in\N$ and $a\in\C$. Then we have that
	$$\sum_{m\in\Z} T(f)(Nm+a) e^{2\pi i m x}
	=\frac{1}{N} \sum_{m=1}^N \widehat{f}\pa{\frac{m-1-x}{N}} K\pa{\frac{m-1-x}{N}}e^\frac{2\pi i a (m-1-x)}{N},$$
	for $x\in \pa{\frac{N-2}{2},\frac{N}{2}}$.
\end{lemma}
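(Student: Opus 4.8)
The plan is to prove Lemma \ref{lem2} by a direct Fourier-analytic computation: express $T(f)(Nm+a)$ via the inversion formula, recognize the resulting sum over $m$ as a periodization, and then split the frequency line into unit-length intervals to collapse the periodization into a finite sum of $N$ terms.

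First I would write, using that $K$ is bounded and $\widehat{f}\in L^1\cap L^2$ is supported in $[-1/2,1/2]$,
\[
T(f)(Nm+a)=\int_{-1/2}^{1/2}\widehat{f}(\xi)K(\xi)e^{2\pi i(Nm+a)\xi}\,\d\xi,
\]
so that the left-hand side of the claimed identity becomes, after interchanging sum and integral,
\[
\sum_{m\in\Z}T(f)(Nm+a)e^{2\pi imx}=\int_{-1/2}^{1/2}\widehat{f}(\xi)K(\xi)e^{2\pi ia\xi}\Big(\sum_{m\in\Z}e^{2\pi im(N\xi+x)}\Big)\,\d\xi.
\]
The inner sum is, in the distributional sense, the Dirac comb $\sum_{m\in\Z}e^{2\pi im(N\xi+x)}=\sum_{k\in\Z}\delta(N\xi+x-k)=\frac1N\sum_{k\in\Z}\delta\big(\xi-\tfrac{k-x}{N}\big)$; since for $x\in(\tfrac{N-2}{2},\tfrac{N}{2})$ the points $\tfrac{k-x}{N}$ that land in $(-1/2,1/2)$ are exactly those with $k\in\{1,\dots,N\}$ (this is the arithmetic check $-\tfrac N2<k-x<\tfrac N2$, i.e. $x-\tfrac N2<k<x+\tfrac N2$, which given the range of $x$ selects $k=1,\dots,N$), evaluating against $\widehat f K e^{2\pi i a\cdot}$ yields exactly
\[
\frac1N\sum_{m=1}^N\widehat f\Big(\tfrac{m-1-x}{N}\Big)K\Big(\tfrac{m-1-x}{N}\Big)e^{2\pi i a(m-1-x)/N}
\]
once we relabel $k=m-1$ so that $k\in\{0,\dots,N-1\}$ corresponds to $m\in\{1,\dots,N\}$ — this matches the asserted formula.

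To make this rigorous without invoking distributions, I would instead periodize by hand: substitute $\xi\mapsto \xi+\tfrac{k}{N}$ over a single period of length $1/N$ of the function $\xi\mapsto e^{2\pi iN\xi m}$. Concretely, tile $[-1/2,1/2]$ (up to null sets) by the $N$ intervals $I_k$ of length $1/N$ on which $N\xi$ ranges over a fixed unit interval, write the integral as $\sum_{k}\int_{I_k}$, change variables in each piece to a common interval, and observe that $\sum_m e^{2\pi i m(\cdot)}$ summed against the $L^1$ function obtained by folding $\widehat f K e^{2\pi i a\cdot}$ is, by the $L^1$-convergence of Fourier series of the folded (periodized) function at the level of its Fourier coefficients, exactly the claimed finite sum — this is really just the statement that the Fourier coefficients of the $1$-periodization of $g(y):=\tfrac1N\widehat f(\tfrac{y}{N})K(\tfrac yN)e^{2\pi i a y/N}$ (a function supported in an interval of length $1$, hence equal to its own periodization up to translating the fundamental domain) are the values of $g$ at integer-shifted points, evaluated at the fundamental domain dictated by the range of $x$. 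The role of the hypothesis $x\in(\tfrac{N-2}{2},\tfrac N2)$ is precisely to fix which fundamental domain (equivalently, which $N$ consecutive integers $k$) appears.

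The main obstacle is the convergence bookkeeping: the left-hand side is a Fourier series in $x$ whose coefficients $T(f)(Nm+a)$ need only be square-summable (this is essentially what Corollary \ref{cor} will later quantify), so the identity should be read as an equality in $L^2$ of the torus (or pointwise a.e.), and the interchange of $\sum_m$ and $\int$ above is not absolutely justified termwise. The clean fix is to regard both sides as the Fourier series of the same $L^2(\T)$ function, and verify they have the same Fourier coefficients: the $m$-th coefficient of the right-hand side is $\int_{N-2)/2}^{N/2}\big[\tfrac1N\sum_{j=1}^N\widehat f(\tfrac{j-1-x}{N})K(\tfrac{j-1-x}{N})e^{2\pi i a(j-1-x)/N}\big]e^{-2\pi i m x}\,\d x$, and a single change of variables $x\mapsto N\xi$ in each summand, reassembling the $N$ pieces back into one integral over $[-1/2,1/2]$, returns exactly $T(f)(Nm+a)$. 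This coefficient-matching argument sidesteps all convergence subtleties and is the route I would actually write up.
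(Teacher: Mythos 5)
Your proof is correct and is essentially the paper's argument: the paper applies Poisson summation to the auxiliary function $g(y)=\widehat f\pa{-y/N}K\pa{-y/N}e^{-2\pi i a y/N}$, whose integer periodization collapses to $N$ terms because $\widehat f$ is supported in $[-1/2,1/2]$, and your coefficient-matching write-up (changing variables $x\mapsto j-1-N\xi$ and reassembling the $N$ pieces into $\int_{-1/2}^{1/2}\widehat f(\xi)K(\xi)e^{2\pi i(Nm+a)\xi}\,\d\xi$) is exactly this identity read in the reverse direction. The only blemish is the momentary claim that the surviving indices are $k\in\{1,\dots,N\}$ — they are $k\in\{0,\dots,N-1\}$, as your own relabeling $k=m-1$ and the final formula correctly reflect — which does not affect the argument.
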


\begin{proof}[Proof of Lemma \ref{lem2}]
    Let us start by defining $g(x)=\widehat{f}\pa{-\frac{x}{N}} K\pa{-\frac{x}{N}} e^{-\frac{2\pi i a x}{N}}$ 
    that satisfies
    \begin{align*}
		\widehat{g}(\xi) &=\int_{-\infty}^\infty g(x) e^{-2\pi i x \xi} dx
        =\int_{-\infty}^\infty \widehat{f}\pa{-\frac{x}{N}} 
        K\pa{-\frac{x}{N}} e^{-\frac{2\pi i a x}{N}} e^{-2\pi i x \xi} dx \\
        &=N \int_{-\infty}^\infty \widehat{f}(y) K(y) e^{2\pi i y (N\xi+a)} dy
        =N T(f)(N\xi+a).
	\end{align*}
    Using this function we have that if $x\in \pa{\frac{N-2}{2},\frac{N}{2}}$
	\begin{align*}
		\sum_{m\in\Z} T(f)(Nm+a) e^{2\pi i m x} &=\frac{1}{N} \sum_{m\in\Z} \widehat{g}(m) e^{2\pi i m x}
        =\frac{1}{N} \sum_{m\in\Z} g(x+m) \\
        &=\frac{1}{N} \sum_{m\in\Z} \widehat{f}\pa{-\frac{x+m}{N}} K\pa{-\frac{x+m}{N}} e^{-\frac{2\pi i a (x+m)}{N}} \\
        &=\frac{1}{N} \sum_{m=1}^N \widehat{f}\pa{\frac{m-1-x}{N}} K\pa{\frac{m-1-x}{N}}e^\frac{2\pi i a (m-1-x)}{N},
	\end{align*}
    where in the second equality we applied Poisson's summation formula and in the last one we used that 
    $\widehat{f}$ is supported on $[-1/2,1/2]$. \\
\end{proof}

\subsection{First proof of Theorem \ref{theo}}

The first proof will be based on the idea of obtaining a bijective relation between the data of the desired formula, $\pa{\suc{T_n(f)(Nm)}_{m\in\Z}}_{n=1}^N$, with the data of an already known formula, in this case we will use the Shannon-Whittaker interpolation formula, whose data is $\suc{f(m)}_{m\in\Z}$. {This idea was already explored in \cite[Theorem 1.2]{RS2020}, where the authors present an alternative proof of the aforementioned interpolation formula obtained by Vaaler in \cite{Va1985}. Note that, with this reasoning, we transform the problem of the existence of an interpolation formula into a problem of inverting an operator between sequence spaces, which can be quite complex.} In order to illustrate this reasoning we will first give a simplified version of it, where we will just change one value in the data of the interpolation formula. \\

We consider the problem of the existence of an interpolation formula on $PW_\pi$ only depending on $\suc{f(m)}_{m\in\Z\setminus\{0\}}$ and $f'(a)$ for some $a\in\R$, which is simply the Shannon--Whittaker interpolation formula but changing $f(0)$ by $f'(a)$. We start by using the Shannon--Whittaker interpolation formula to put $f'(a)$ as a function of $\{f(m)\}_{m\in\Z}$ as follows
$$f'(a)=\sum_{m\in\Z} f(m) \sinc'(a-m),$$
from this, if $\sinc'(a)\neq 0$ we can obtain that
$$f(0)=f'(a)\frac{1}{\sinc'(a)}-\sum_{m\in\Z\setminus\{0\}} f(m) \frac{\sinc'(m-a)}{\sinc'(a)}.$$
Therefore if $\sinc'(a)\neq 0$ we have obtained a way to write $f(0)$ as a linear combination  of the values of the interpolation formula we want to obtain, using this we get
\begin{align*}
    f(x) &= \sum_{m\in\Z} f(m) \sinc(x-m) \\
    &= \pa{f'(a)\frac{1}{\sinc'(a)}-\sum_{m\in\Z\setminus\{0\}} f(m)\frac{\sinc'(m-a)}{\sinc'(a)}} 
    \sinc(x) + \sum_{m\in\Z\setminus\{0\}} f(m) \sinc(x-m) \\
    &= f'(a)\frac{\sinc(x)}{\sinc'(a)} + 
    \sum_{m\in\Z\setminus\{0\}} f(m) \pa{\sinc(x-m)-\frac{\sinc'(m-a)}{\sinc'(a)}\sinc(x)},
\end{align*}
obtaining the interpolation formula we were looking for. Note that the condition $\sin'(a)\neq 0$ is not only sufficient for the formula to exist but also necessary as if this is not the case then any multiple of the $\sinc$ function will have all the data in the interpolation formula equal to 0. 

\begin{proof}[First proof of Theorem \ref{theo}:]
    We start by considering the operator $A_T: \pa{\ell^2}^N \to \pa{\ell^2}^N$ defined as
    $$A_T\pa{\pa{\left\{ b^n_m \right\}_{m\in\Z}}_{n=1}^N}
	=\pa{\left\{ \sum_{s=1}^N\sum_{j\in\Z} b_j^s T_n\pa{\sinc}(N(m-j)-s+1)\right\}_{m\in\Z}}_{n=1}^N,$$
	note that if $f\in PW_\pi$ we have that
	$$A_T\pa{\pa{\left\{ f(Nm+n-1) \right\}_{m\in\Z}}_{n=1}^N}
	=\pa{\left\{ T_n(f)(Nm)\right\}_{m\in\Z}}_{n=1}^N.$$
	We also consider the operator $F_N:L^2\pa{\R / \Z,\R^N}\to \pa{\ell^2}^N$ defined as
	$$F_N\pa{\pa{f_m}_{m=1}^N}=\pa{\left\{\widehat{f_m}(j)\right\}_{j\in\Z}}_{m=1}^N$$
	that is invertible, with inverse $F_N^{-1}:\pa{\ell^2}^N \to L^2\pa{\R / \Z,\R^N}$
	$$F_N^{-1}\pa{\pa{\left\{a^m_j\right\}_{j\in\Z}}_{m=1}^N}(x)=\pa{\sum_{j\in\Z} a^m_j e^{2\pi i j x}}_{m=1}^N.$$
	Now we compose these operators as follows
	\begin{align*}
		&\pa{F_N^{-1} \circ A_T \circ F_N}\pa{\pa{f_m}_{m=1}^N}(x)
		=\pa{F_N^{-1} \circ A_T }\pa{\pa{\left\{\widehat{f_m}(j)\right\}_{j\in\Z}}_{m=1}^N}(x) \\
		& \quad = F_N^{-1}\pa{\pa{\left\{ \sum_{s=1}^N\sum_{j\in\Z} \widehat{f_s}(j) 
		T_n\pa{\sinc}\pa{N(m-j)-s+1}\right\}_{m\in\Z}}_{n=1}^N}(x) \\
		& \quad = F_N^{-1}\pa{\pa{\left\{ \sum_{s=1}^N\sum_{j\in\Z} \widehat{f_s}(j) 
		\mathcal{F}\pa{\sum_{r\in\Z} T_n(\sinc)(Nr-s+1) e^{2\pi i r \cdot}}(m-j)\right\}_{m\in\Z}}_{n=1}^N}(x) \\
		& \quad = F_N^{-1}\pa{\pa{\left\{ \sum_{s=1}^N \mathcal{F}\pa{f_s(\cdot) 
		\sum_{r\in\Z} T_n(\sinc)(Nr-s+1) e^{2\pi i r \cdot}}(m)\right\}_{m\in\Z}}_{n=1}^N}(x) \\
		& \quad = \pa{\sum_{s=1}^N f_s(x) \pa{\sum_{r\in\Z} T_n(\sinc)(Nr-s+1) e^{2\pi i r x}}}_{n=1}^N \\
		& \quad =\pa{\sum_{r\in\Z} T_n(\sinc)(Nr-s+1) e^{2\pi i r x}}_{n,s=1}^N \pa{f_s(x)}_{s=1}^N.
	\end{align*}
	It is clear that the operator $A_T:\pa{\ell^2}^N\to\pa{\ell^2}^N$ is invertible if and only if 
	$F_N^{-1} \circ A_T \circ F_N:L^2\pa{\R / \Z,\R^N}\to L^2\pa{\R / \Z,\R^N}$ is invertible and, due 
	to what we just obtained, this is equivalent to the existence of a bounded inverse of the matrix of functions
	$$\pa{\sum_{r\in\Z} T_n(\sinc)(Nr-s+1) e^{2\pi i r x}}_{n,s=1}^N.$$
	Using Lemma \ref{lem2} we obtain that, for $x\in \pa{\frac{N-2}{2},\frac{N}{2}}$, we have that
	\begin{align*}
		&\pa{\sum_{r\in\Z} T_n(\sinc)(Nr-s+1) e^{2\pi i r x}}_{n,s=1}^N \\
		&\quad = \pa{\frac{1}{N} \sum_{m=1}^N \widehat{\sinc}\pa{\frac{m-1-x}{N}} K_n\pa{\frac{m-1-x}{N}}
		e^{-\frac{2\pi i \pa{s-1} \pa{m-1-x}}{N}}}_{n,s=1}^N \\
		&\quad = \pa{\frac{1}{N}K_n\pa{\frac{m-1-x}{N}}}_{n,m=1}^N 
        \pa{e^{-\frac{2\pi i (s-1) (m-1-x)}{N}}}_{m,s=1}^N \\
		&\quad = \pa{\mathcal{M}_T(x)}^T \pa{e^{-\frac{2\pi i (s-1) (m-1-x)}{N}}}_{m,s=1}^N 
	\end{align*}
	and as
	\begin{align*}
		\det \pa{e^{-\frac{2\pi i (s-1) (m-1-x)}{N}}}_{m,s=1}^N
		&= \prod_{1\leq n < m \leq N} \pa{e^{-\frac{2\pi i (m-1-x)}{N}}-e^{-\frac{2\pi i (n-1-x)}{N}}} \\
		&= \prod_{1\leq n < m \leq N} e^\frac{2\pi i x}{N} 
		\pa{e^{-\frac{2\pi i (m-1)}{N}}-e^{-\frac{2\pi i (n-1)}{N}}} \neq 0
	\end{align*}
	we deduce that $A_T:\pa{\ell^2}^N\to\pa{\ell^2}^N$ is invertible if and only if $\essinf_{x\in \pa{\frac{N-2}{2},\frac{N}{2}}} \abs{\det \mathcal{M}_T(x)}>0$, by Lemma \ref{lem1}. So now we will show that 2) holds if and only if $A_T:\pa{\ell^2}^N\to\pa{\ell^2}^N$ is invertible. One of the implications is easy to obtain as if 2) holds, then we have that
	$$\pa{\suc{f(Nm+n-1)}_{m\in\Z}}_{n=1}^N
	=\pa{\suc{\sum_{s=1}^N \sum_{j\in\Z} T_s(f)(Nj) g^T_s(N(m-j)+n-1)}_{m\in\Z}}_{n=1}^N$$
	and from this we can deduce that $\pa{A_T}^{-1}:\pa{\ell^2}^N\to\pa{\ell^2}^N$ exists and is defined by
	$$\pa{A_T}^{-1}\pa{\pa{\suc{b^n_m}_{m\in\Z}}_{n=1}^N}
	=\pa{\suc{\sum_{s=1}^N \sum_{j\in\Z} b_j^s \ g^T_s(N(m-j)+n-1)}_{m\in\Z}}_{n=1}^N.$$
	For the other implication, we start by assuming that $A_T:\pa{\ell^2}^N\to\pa{\ell^2}^N$ is invertible, 
	note that its inverse has to satisfy
	$$\pa{A_T}^{-1}\pa{\pa{\left\{ T_n(f)\pa{km}\right\}_{m\in\Z}}_{n=1}^N}
	=\pa{\left\{ f(km+n-1) \right\}_{m\in\Z}}_{n=1}^N,$$
	for all $f\in PW_\pi$. Now, for each $s=1,...,N$, we will consider the elements of $\pa{\ell^2}^N$ defined as
    $$\pa{\suc{C^T_{s,n,m}}_{m\in\Z}}_{n=1}^N:=\pa{A_T}^{-1}\pa{\pa{\suc{\delta_{n,s}\delta_{m,0}}_{m\in\Z}}_{n=1}^N}$$
    and use them to define the following functions in $PW_\pi$
    $$g^T_s(x)= \sum_{n=1}^N \sum_{m\in\Z} C^T_{s,n,m} \sinc (x-Nm-n+1).$$
    Using the linearity of $\pa{A_T}^{-1}$ we can obtain that
    \begin{align*}
		&\pa{\left\{ f(Nm+n-1) \right\}_{m\in\Z}}_{n=1}^N
		= \pa{A_T}^{-1}\pa{\pa{\left\{ T_n(f)(Nm)\right\}_{m\in\Z}}_{n=1}^N} \\
		&\quad = \pa{A_T}^{-1}\pa{\pa{\left\{ 
		\sum_{s=1}^N \sum_{j\in\Z} T_s(f)(Nj) \delta_{n,s}\delta_{m-j,0} \right\}_{m\in\Z}}_{n=1}^N} \\
		&\quad = \sum_{s=1}^N \sum_{j\in\Z} T_s(f)(Nj)
		\pa{A_T}^{-1}\pa{\pa{\left\{ \delta_{n,s}\delta_{m-j,0} \right\}_{m\in\Z}}_{n=1}^N} \\
		&\quad = \sum_{s=1}^N \sum_{j\in\Z} T_s(f)(Nj)
		\pa{\suc{C^T_{s,n,m-j}}_{m\in\Z}}_{n=1}^N
		= \pa{\suc{\sum_{s=1}^N \sum_{j\in\Z} T_s(f)(Nj) 
		C^T_{s,n,m-j}}_{m\in\Z}}_{n=1}^N,
	\end{align*}
	for any function $f\in PW_\pi$. Using this we can finally obtain that
	\begin{align*}
		f(x) &= \sum_{n=1}^N \sum_{m\in\Z} f(Nm+n-1) \sinc(x-Nm-n+1) \\
		&= \sum_{n=1}^N \sum_{m\in\Z} \sum_{s=1}^N \sum_{j\in\Z} T_s(f)(Nj) C^T_{s,n,m-j} \sinc(x-Nm-n+1) \\
		&= \sum_{s=1}^N \sum_{j\in\Z} T_s(f)(Nj) \sum_{n=1}^N \sum_{m\in\Z} C^T_{s,n,m-j} \sinc(x-Nm-n+1) \\
		&= \sum_{s=1}^N \sum_{j\in\Z} T_s(f)(Nj) \sum_{n=1}^N \sum_{m\in\Z} C^T_{s,n,m} \sinc(x-Nj-Nm-n+1) \\
		&= \sum_{s=1}^N \sum_{j\in\Z} T_s(f)(Nj) g^T_s(x-Nj),
	\end{align*}
	for any function $f\in PW_\pi$. So the only thing left to show is that the functions $g^T_n$, $n=1,..,N$, satisfy the characterizations i) and ii). Using the definition of these functions, we get 
	\begin{align*}
		\pa{\suc{T_n\pa{g^T_s}(Nm)}_{m\in\Z}}_{n=1}^N
		&= A_T \pa{\pa{\suc{g^T_s(Nm+n-1)}_{m\in\Z}}_{n=1}^N} \\
		&= A_T \pa{\pa{\suc{C^T_{s,n,m}}_{m\in\Z}}_{n=1}^N}
		= \pa{\suc{\delta_{n,s}\delta_{m,0}}_{m\in\Z}}_{n=1}^N,
	\end{align*}
	showing that these functions satisfy characterization i), the uniqueness comes from the existence of the interpolation formula. Using this together with Lemma \ref{lem2} we obtain
	$$ \delta_{n,s} =\sum_{m\in\Z} T_n\pa{g^T_s}(Nm) e^{2\pi i m x} \\
	= \frac{1}{N} \sum_{m=1}^N \widehat{g^T_s}\pa{\frac{m-1-x}{N}} K_n\pa{\frac{m-1-x}{N}},$$
	for almost every $x\in\pa{\frac{N-2}{2},\frac{N}{2}}$, or equivalently
	$$Id= \pa{\widehat{g^T_s}\pa{\frac{m-1-x}{N}}}_{s,m=1}^N \pa{\frac{1}{N} K_n\pa{\frac{m-1-x}{N}}}_{m,n=1}^N,$$
	for almost every $x\in\pa{\frac{N-2}{2},\frac{N}{2}}$, therefore obtaining characterization ii). \\
\end{proof}

It should be noted that from this proof one can easily deduce Corollary \ref{cor}. Using that
$$A_T\pa{\pa{\left\{ f(Nm+n-1) \right\}_{m\in\Z}}_{n=1}^N}=\pa{\left\{ T_n(f)(Nm)\right\}_{m\in\Z}}_{n=1}^N$$
and that under 1) or 2) $A_T$ is bounded and, by the open mapping theorem, its inverse is also bounded, and consequently
$$\norm{(A_T)^{-1}}^{-1} \sum_{m\in\Z}\abs{f(m)}^2
\leq \sum_{n=1}^N \sum_{m\in\Z} \abs{T_n(f)(Nm)}^2 
\leq \norm{A_T} \sum_{m\in\Z}\abs{f(m)}^2,$$
to end the proof, we simply use that for any $f\in PW_\pi$
$$ \sum_{m\in\Z}\abs{f(m)}^2=\int_\R \abs{f(x)}^2 dx.$$

\subsection{Second proof of Theorem \ref{theo}}
The following proof is a generalization of the ideas used by Vaaler in \cite{Va1985} in the proof of his formula, previously stated in the introduction.

\begin{proof}[Second proof of Theorem \ref{theo}:] \quad
	
	\underline{$1)\implies 2)$:} We start by considering, for $n=1,...,N$, the functions 
    $u^T_n\in L^\infty\pa{-\frac{1}{2},-\frac{1}{2}+\frac{1}{N}}$ that satisfy
    $$\pa{\mathcal{M}_T\pa{-N \xi}}^{-1}=\pa{u^T_n\pa{\xi+\frac{j-1}{N}}}_{n,j=1}^N,$$
    for almost all $\xi\in \pa{-\frac{1}{2},-\frac{1}{2}+\frac{1}{N}}$. From this we obtain 
    that for all $j,s=1,...,N$ we have
    $$\delta_{j,s}=\sum_{n=1}^N \frac{1}{N} K_n\pa{\frac{N\xi+s-1}{N}}u^T_n\pa{\xi+\frac{j-1}{N}},$$
    for almost every $\xi\in \pa{-\frac{1}{2},-\frac{1}{2}+\frac{1}{N}}$. Therefore we get 
    that for all $f\in PW_\pi$ and all $j=1,...,N$ we have
    \begin{align*}
        \widehat{f}\pa{\xi+\frac{j-1}{N}}
        &=\sum_{s=1}^N \widehat{f}\pa{\xi+\frac{s-1}{N}}
        \sum_{n=1}^N \frac{1}{N} K_n\pa{\frac{N\xi+s-1}{N}}u^T_n\pa{\xi+\frac{j-1}{N}} \\
        &= \sum_{n=1}^N \pa{\frac{1}{N}\sum_{s=1}^N \widehat{f}\pa{\frac{s-1-(-N\xi)}{N}} K_n\pa{\frac{s-1-(-N\xi)}{N}}}
        u^T_n\pa{\xi+\frac{j-1}{N}} \\
        &=\sum_{n=1}^N \pa{\sum_{m\in\Z} T_n(f)(Nm) e^{2\pi i m (-N\xi)}} u^T_n\pa{\xi+\frac{j-1}{N}} \\
        &=\sum_{n=1}^N \pa{\sum_{m\in\Z} T_n(f)(Nm) e^{2\pi i m \pa{-N\pa{\xi+\frac{j-1}{N}}}}} u^T_n\pa{\xi+\frac{j-1}{N}},
    \end{align*}
    for almost every $\xi\in \pa{-\frac{1}{2},-\frac{1}{2}+\frac{1}{N}}$, or equivalently
    $$\widehat{f}(\xi)=\sum_{n=1}^N \pa{\sum_{m\in\Z} T_n(f)(Nm) e^{2\pi i m (-N\xi)}} u^T_n(\xi),$$
    for all $f\in PW_\pi$ and almost every $\xi\in\pa{-\frac{1}{2},\frac{1}{2}}$. From this, we 
    easily get that, for all $f\in PW_\pi$, we have
    \begin{align*}
        f(x)&=\int_{-\frac{1}{2}}^\frac{1}{2} \widehat{f}(\xi)e^{2\pi i\xi x}d\xi
        =\int_{-\frac{1}{2}}^\frac{1}{2} 
        \pa{\sum_{n=1}^N \pa{\sum_{m\in\Z} T_n(f)(Nm) e^{2\pi i m (-N\xi)}} u^T_n(\xi)}e^{2\pi i\xi x}d\xi \\
        &=\sum_{n=1}^N \sum_{m\in\Z}T_n(f)(Nm)
        \int_{-\frac{1}{2}}^\frac{1}{2} u^T_n(\xi) e^{2\pi i\xi(x-Nm)}d\xi,
    \end{align*}
    where the convergence holds both in $L^2$ sense and for almost every $x\in\R$, defining 
    $$g^T_n(x):=\int_{-\frac{1}{2}}^\frac{1}{2} u^T_n(\xi) e^{2\pi i\xi x}d\xi$$
    we get the result. \\
    
    \underline{$2)\implies 1)$:} From 3) we can deduce that
    \begin{align*}
        \widehat{f}(\xi)&= \int_{-\infty}^\infty f(x)e^{-2\pi i x \xi}dx
        =\int_{-\infty}^\infty \pa{\sum_{n=1}^N \sum_{m\in\Z} T_n(f)(Nm) g^T_n(x-Nm)} e^{-2\pi i x \xi}dx \\
        & = \sum_{n=1}^N \sum_{m\in\Z} T_n(f)(Nm) \int_{-\infty}^\infty g^T_n(y) e^{-2\pi i (x+Nm) \xi}dx 
        = \sum_{n=1}^N \pa{\sum_{m\in\Z} T_n(f)(Nm) e^{2\pi i m (-N\xi)}} \widehat{g^T_n}(\xi),
    \end{align*}
    for all $f\in PW_\pi$ and almost every $\xi\in \pa{-\frac{1}{2},\frac{1}{2}}$. Note that 
    this is equivalent to saying that for all $f\in PW_\pi$ and all $j=1,...,N$ we have
    \begin{align*}
        &\widehat{f}\pa{\xi+\frac{j-1}{N}}
        =\sum_{n=1}^N \pa{\sum_{m\in\Z} T_n(f)(Nm) e^{2\pi i m \pa{-N\pa{\xi+\frac{j-1}{N}}}}}
        \widehat{g^T_n}\pa{\xi+\frac{j-1}{N}} \\
        &\quad =\sum_{n=1}^N \pa{\sum_{m\in\Z} T_n(f)(Nm) e^{2\pi i m \pa{-N\xi}}}
        \widehat{g^T_n}\pa{\xi+\frac{j-1}{N}} \\
        &\quad =\sum_{n=1}^N \pa{\frac{1}{N}\sum_{s=1}^N
        \widehat{f}\pa{\frac{N\xi+s-1}{N}}K_n\pa{\frac{N\xi+s-1}{N}}}
        \widehat{g^T_n}\pa{\xi+\frac{j-1}{N}} \\
        &\quad =\sum_{s=1}^N \widehat{f}\pa{\xi+\frac{s-1}{N}}
        \pa{\sum_{n=1}^N \frac{1}{N}K_n\pa{\frac{N\xi+s-1}{N}}\widehat{g^T_n}\pa{\frac{N\xi+j-1}{N}}},
    \end{align*}
    for almost every $\xi\in \pa{-\frac{1}{2},-\frac{1}{2}+\frac{1}{N}}$. As this has to hold 
    for all $f\in PW_\pi$, we deduce that for all $j,s=1,...,N$
    $$\delta_{j,s}=\sum_{n=1}^N \frac{1}{N}K_n\pa{\frac{N\xi+s-1}{N}}\widehat{g^T_n}\pa{\frac{N\xi+j-1}{N}},$$
    for almost every $\xi\in \pa{-\frac{1}{2},-\frac{1}{2}+\frac{1}{N}}$, or equivalently that 
    \begin{align*}
    	Id
    	&= \pa{\frac{1}{N}K_n\pa{\frac{N\xi+s-1}{N}}}_{s,n=1}^N
    	\pa{\widehat{g^T_n}\pa{\frac{N\xi+j-1}{N}}}_{n,j=1}^N \\
    	&= \mathcal{M}_T(-N\xi) \cdot \pa{\widehat{g^T_n}\pa{\frac{j-1-(-N\xi)}{N}}}_{n,j=1}^N,
    \end{align*}
    for almost every $\xi\in \pa{-\frac{1}{2},-\frac{1}{2}+\frac{1}{N}}$, or equivalently
    $$Id=\mathcal{M}_T(\xi) \cdot \pa{\widehat{g^T_n}\pa{\frac{j-1-\xi}{N}}}_{n,j=1}^N,$$
    for almost every $\xi\in \pa{\frac{N-2}{2},\frac{N}{2}}$, which gives us characterization ii).
    
    To see that the functions $g^T_n$ also satisfy characterization i) note that, as the right-inverse 
    of a matrix is also its left-inverse, we have
    $$Id=\pa{\widehat{g^T_n}\pa{\frac{s-1-\xi}{N}}}_{n,s=1}^N \pa{\frac{1}{N}K_m\pa{\frac{s-1-\xi}{N}}}_{s,m=1}^N,$$
    for almost every $\xi\in \pa{\frac{N-2}{2},\frac{N}{2}}$. Using this together with Lemma \ref{lem2} we obtain
    \begin{align*}
    	\delta_{n,m} &=\sum_{s=1}^N \widehat{g^T_n}\pa{\frac{s-1-\xi}{N}} \frac{1}{N}K_m\pa{\frac{s-1-\xi}{N}} \\
    	&= \sum_{j\in\Z} T_m\pa{g^T_n}(Nj) e^{2\pi i j \xi},
    \end{align*}
    for all $n,m=1,...,N$ and almost every $\xi\in \pa{\frac{N-2}{2},\frac{N}{2}}$, which implies that
    $$T_m\pa{g^T_n}(Nj)=\delta_{n,m}\delta_{j,0},$$
	for all $m=1,...,N$ and $j\in\Z$. \\
\end{proof}

\subsection{Proof of Theorem \ref{theo2}}

Before starting with the proof of this result, we will make some comments that will allow us to reduce this problem to proving some properties of a linear operator. Note that if we consider the operator
\begin{align*}
	O^{\rho,\delta}_T:  PW_{\delta\pi} &\to \pa{\ell^2}^N \\
	f \quad &\to \pa{\{T_n(f)(\rho k)\}_{k\in\Z}}_{n=1}^N
\end{align*}
then saying that $\pa{\rho\Z,T}$ is a set of stable sampling for $PW_{\delta\pi}$ is equivalent to saying that $O^{\rho,\sigma}_T$ is injective and, similarly saying that $\pa{\rho\Z,T}$ is an interpolation set for $PW_{\delta\pi}$ is equivalent to saying that $O^{\rho,\sigma}_T$ is surjective. Consequently, the fact that $\pa{\rho\Z,T}$ is both a set of stable sampling and an interpolation set for $PW_{\delta\pi}$ is equivalent to the bijectivity of $O^{\rho,\sigma}_T$ which is equivalent to the existence of an interpolation formula in $PW_{\delta\pi}$ depending on $\pa{\{T_n(f)(\rho k)\}_{k\in\Z}}_{n=1}^N$. \\

Now let us consider, for any $a>0$, the operator $H_a$ defined as $H_a(f)(x):=f(ax)$, clearly this operator is invertible, with inverse $H_{1/a}$, and satisfies $H_a(PW_{\delta\pi})=PW_{a\delta\pi}$. It is easy to see that $O^{\rho,\delta}_T\circ H_a=O^{a\rho,\delta/a}_{T^a}$ where $T^\alpha=\pa{T^a_n}_{n=1}^N$ with 
$$T^a_n(f)(x)=\int_\R K_n(a\xi) \widehat{f}(\xi) e^{2\pi i x \xi} d\xi$$
where $K_n$ is the Fourier multiplier associated to $T_n$. Therefore the injectivity or surjectivity of $O^{\rho,\delta}_T$ are equivalent to the same property for $O^{a\rho,\delta/a}_{T^a}$ for any $a>0$. With this we can now start with the proof of the result.

\begin{proof}[Proof of Theorem \ref{theo2}]
	First of all, note that the matrix $M^{\rho}_T$ is simply the one that appeared in Theorem \ref{theo} but associated to the operators $T^{N/\rho}$. We will start by studying the case $\delta=\frac{N}{\delta}$ in each of the cases 1), 2) and 3), from which we will latter deduce the rest of the range of $\delta$. \\
    
    We start by proving 1), we know by Theorem \ref{theo} that the hypothesis in this case is equivalent to the invertibility of the operator $O^{N,1}_{T^{N/\rho}}$ which, by the comments made before, is equivalent to the invertibility of $O^{\rho,N/\rho}_T$. Due to this, in the cases 2) and 3) the operator $O^{\rho,N/\rho}_T$ will not be invertible. Note that, with the reasoning made in the proof of Theorem \ref{theo}, it is easy to see that $O^{\rho,N/\rho}_T$ be injective whenever the matrix $M^\rho_T(x)$ is so, and similarly with the surjectivity. To do this we simply use the first two points of Lemma \ref{lem1}, obtaining that in the case 2) the operator $O^{\rho,N/\rho}_T$ is injective but not surjective and in the case 3) it is not injective nor surjective. \\
    
    For the rest of the range of $\delta$, we will use that if $\delta'\geq \delta$ then $O^{\rho,\delta'}_T\vert_{PW_{\delta\pi}}=O^{\rho,\delta}_T$. From this it is easy to deduce that, if $\delta'>\delta$, we have that if $O^{\rho,\delta'}_T$ is injective then $O^{\rho,\delta}_T$ is also injective but not surjective and, similarly, if $O^{\rho,\delta}_T$ is surjective then $O^{\rho,\delta'}_T$ is also surjective but not injective. \\
\end{proof}

\section{Some applications}

We now present some applications of Theorem \ref{theo}, proving the existence or non-existence of specific interpolation formulas. In the following applications, the operators $T_n$ we will be considering will be derivatives, translations, difference quotients and compositions and linear combinations of such operators.

\subsection{General observations}

The first observation we want to highlight is the case where the operators we consider have a common ``factor", meaning that $T_n=T'\circ T_n^*$ for all $n=1,...,N$. In this case, we will have that

\begin{align*}
    \mathcal{M}_T(x)
    &=\pa{\frac{1}{N} K_n\pa{\frac{m-1-x}{N}}}_{m,n=1}^N
    = \pa{\frac{1}{N} K' \pa{\frac{m-1-x}{N}}K^*_n\pa{\frac{m-1-x}{N}}}_{m,n=1}^N \\
    &= \pa{ \sum_{s=1}^N K' \pa{\frac{s-1-x}{N}} \delta_{m,s} \frac{1}{N}K^*_n\pa{\frac{s-1-x}{N}}}_{m,n=1}^N \\
    &= \pa{K' \pa{\frac{s-1-x}{N}} \delta_{m,s}}_{m,s=1}^N M^{T^*}(x).
\end{align*}
From this we deduce that, if $T_n=T'\circ T_n^*$, an interpolation formula depending on $\suc{T_n(f)(Nm)}_{m\in\Z}$ exists if and only if $T'$ is invertible in $PW_\pi$ and the interpolation formula depending on $\suc{T^*_n(f)(Nm)}_{m\in\Z}$ exists. A simple example of this equivalence is when $T'$ is a translation, in which case we have that an interpolation formula depending on $\suc{T_n(f)(Nm)}_{m\in\Z}$ exists then the one depending on $\suc{T_n(f)(Nm+a)}_{m\in\Z}$, 
for any $a\in\C$, also exists. \\

Another case we want to mention is that if all the multipliers have a common root in 
$\left[-\frac{1}{2},\frac{1}{2}\right]$ then the interpolation formula does not exist. This is because in this case there will exist $m_0\in\{1,...,N\}$ and $x_0\in\left[\frac{N-2}{2},\frac{N}{2}\right]$ such that $K_n\pa{\frac{m_0-1-x_0}{N}}=0$ for all $n=1,...,N$ and therefore the $m_0$-th column of $\mathcal{M}_T(x_0)$ will be all 0, making it not invertible. 

As an example of this, we have that if $T_n(f)=P_n(D)(f)(\cdot+a_n)$ for all $n=1,...,N$ then at least one of the differential polynomials need to have an independent term, because if this is not the case then their associated multipliers, $K_n(x)=P_n(2\pi i x) e^{2\pi i x a_n}$, will have $x=0$ as a common root. Note that this could also be deduced from the previous observation as if none of the polynomials had an independent term then all the operators would have the derivative, an operator that is not invertible, as a common factor.

\subsection{Dynamical sampling}

We consider the case where $T_n= T^{n-1}$ for all $n=1,...,N$, and therefore $K_n(x)=\pa{K(x)}^n$. The matrix in this case has the following form
$$M_{\pa{T^{n-1}}_{n=1}^N}(x)=\pa{\frac{1}{N}\pa{K\pa{\frac{m-1-x}{k}}}^{n-1}}_{m,n=1}^N.$$
The determinant of this matrix is 
$$\det M_{\pa{T^{n-1}}_{n=1}^N}(x)=\frac{1}{N^N} \prod_{0\leq n< m\leq N} \pa{K\pa{\frac{m-1-x}{N}}-K\pa{\frac{n-1-x}{N}}},$$
therefore, by Theorem \ref{theo}, if $K(x)\neq K(y)$ whenever $x,y\in[-\frac{1}{2},\frac{1}{2}]$ are such that $x-y\in \frac{1}{N}\Z$ then 
$$f(x)=\sum_{n=1}^N \sum_{m\in\Z} T^{n-1}(f)(Nm) g^{T,N}_{n}(x-Nm),$$
for all $f\in PW_\pi$\footnote{For instance, if $K$ is injective in $[-1/2,1/2]$, such formula will hold.}. We now proceed to obtain  more explicit formulas for the interpolating functions $g^{T,N}_{n}$. Using characterization i) might be complicated in such a general setting, so we will use characterization ii). Note that computing an explicit inverse of the previously stated matrix is quite difficult, but as we will see, there will be no need to do so in this case. Characterization ii) tells us that
$$\frac{1}{N}\sum_{m=1}^N \widehat{g^{T,N}_m}\pa{\frac{n-1-x}{N}} \pa{K\pa{\frac{j-1-x}{N}}}^{m-1}=\delta_{n,j},$$
for all $x\in \pa{\frac{N-2}{2},\frac{N}{2}}$ and all $n,j=1,...,N$, from this we can deduce that
\begin{equation} \label{DynSam1}
    \frac{1}{N}\sum_{m=1}^N \widehat{g^{T,N}_m}\pa{\frac{n-1-x}{N}} y^{m-1}
    =\prod_{\substack{j=1 \\ j\neq n}}^N \frac{y-K\pa{\frac{j-1-x}{N}}}{K\pa{\frac{n-1-x}{N}}-K\pa{\frac{j-1-x}{N}}},
\end{equation}
for all $x\in \pa{\frac{N-2}{2},\frac{N}{2}}$, all $n=1,...,N$ and all $y\in\R$.

As a consequence of this, we have that
$$\widehat{g^{T,N}_N}\pa{\frac{m-1-x}{N}}
= \frac{N}{\prod_{\substack{j=1 \\ j\neq m}}^N \pa{K\pa{\frac{m-1-x}{N}}-K\pa{\frac{j-1-x}{N}}}},$$
for $m=1,..,N$. For the rest of the interpolating functions we will proceed as follows, first of all, using characterization i) we have that the functions $g^{T,N}_n$ are the only ones satisfying
$$T^{m-1}\pa{g^{T,N}_n}(N s)=\delta_{m,n}\delta_{s,0},$$
for all $n,m=1,...,N$ and $s\in\Z$. Using this we have that for all $m=1,...,N$, $n=2,...,N$ and $s\in\Z$ the following holds
\begin{align*}
    T^{m-1}\pa{T\pa{g^{T,N}_n}}(N s)
    &= T^{(m+1)-1}\pa{g^{T,N}_n}(N s)=
    \begin{cases}
        \delta_{m+1,n} \delta_{s,0} & \text{ if } m\neq N \\
        T^N\pa{g^{T,N}_n}(Ns) & \text{ if } m= N
    \end{cases} \\
    &= \delta_{m,n-1} \delta_{s,0} +\sum_{j\in\Z} T^N\pa{g^{T,N}_n}(Nj) \delta_{m,N} \delta_{s-j,0} \\
    &= T^{m-1}\pa{g^{T,N}_{n-1}}(Ns)+\sum_{j\in\Z} T^N\pa{g^{T,N}_n}(N j) T^{m-1}\pa{g^{T,N}_N}(N(s-j)) \\
    &= T^{m-1}\pa{g^{T,N}_{n-1}+\sum_{j\in\Z} T^N\pa{g^{T,N}_n}(N j) g^{T,N}_N(\cdot-Nj)}(Ns),
\end{align*}
and therefore 
$$g^{T,N}_{n-1}(x)=T\pa{g^{T,N}_n}(x)-\sum_{j\in\Z} T^N\pa{g^{T,N}_n}(N j) g^{T,N}_N(x-Nj).$$

\subsection{Formulas depending on $\suc{f^{(k)}(Nm)}_{m\in\Z}$, $k=0,...,N-1$, for any $N\in\N$} 
As mentioned in the introduction, these formulas were obtained by Littmann in his thesis \cite{Li2003}, but we will now give an alternative proof of this result using our theorem. Note that, this is a particular case for the previous example, where the operator $T$ taken is the derivative, whose associated multiplier is $K(x)=2\pi i x$. Note that this multiplier satisfies the condition stated in the previous example for all $N$, because it is injective, and therefore we have that
$$f(x)=\sum_{n=1}^N \sum_{m\in\Z} f^{(n-1)}(Nm) g^N_n(x-Nm),$$
for all $f\in PW_\pi$ and all $N\in\N$. Using \eqref{DynSam1} to this particular case we obtain
\begin{align}\begin{split}\label{Litt1}
	\frac{1}{N} \sum_{m=1}^N \widehat{g^N_m}\pa{\frac{n-1-x}{N}} y^{m-1}
	&= \prod^N_{\substack{ j=1 \\ j\neq n}} \frac{y-2\pi i \frac{j-1-x}{N}}{2\pi i \frac{n-1-x}{N}-2\pi i \frac{j-1-x}{N}} \\
	&= \pa{\frac{N}{2\pi i}}^{N-1}\frac{1}{\prod^N_{\substack{ j=1 \\ j\neq n}}(n-j)}
	\prod^N_{\substack{ j=1 \\ j\neq n}} \pa{y-2\pi i \frac{j-1-x}{N}} \\
	&= \pa{\frac{N}{2\pi i}}^{N-1}\frac{(-1)^{N-n}}{(n-1)!(N-n)!}
	\prod^N_{\substack{ j=1 \\ j\neq n}} \pa{y+\frac{2\pi i x}{N}-\frac{2\pi i(j-1)}{N}},
\end{split}\end{align}
for all $n=1,...,N$ and $x\in\pa{\frac{N-2}{2},\frac{N}{2}}$. Let us denote
$$P_n^N(y,x)=\prod^N_{\substack{ j=1 \\ j\neq n}} \pa{y+\frac{2\pi i x}{N}-\frac{2\pi i(j-1)}{N}},$$
then by \eqref{Litt1} we have that
\begin{align*}
	\widehat{g^N_m}\pa{\frac{n-1-x}{N}}
	&= N \pa{\frac{N}{2\pi i}}^{N-1}\frac{(-1)^{N-n}}{(n-1)!(N-n)!} \frac{\partial_1 ^{m-1} P_n^N(0,x)}{(m-1)!} \\
	&= N \pa{\frac{N}{2\pi i}}^{N-1}\frac{(-1)^{N-n}}{(n-1)!(N-n)!} \frac{\pa{\frac{N}{2\pi i}}^{m-1}}{(m-1)!}
	\frac{d^{m-1}}{dx^{m-1}} P_n^N (0,x) \\
	&= \frac{\pa{\frac{N}{2\pi i}}^{m-1}}{(m-1)!}
	\frac{d^{m-1}}{dx^{m-1}} \widehat{g^N_1}\pa{\frac{n-1-x}{N}}
	= \frac{\widehat{g^N_1}^{(m-1)}\pa{\frac{n-1-x}{N}}}{(m-1)!(-2\pi i)^{m-1}}
\end{align*}
and as a consequence of this we get
\begin{equation}\label{Litt2}
	\widehat{g^N_{m+1}}(\xi)=-\frac{\widehat{g^N_m}'(\xi)}{2\pi i m},
\end{equation}
for all $m=1,...,N-1$ and almost every $\xi\in\R$. Note that, by \eqref{Litt1}, the functions $\widehat{g^N_m}$ are equal to a polynomial in each of the intervals of the form $\pa{-\frac{1}{2}+\frac{n-1}{N},-\frac{1}{2}+\frac{n}{N}}$, for $n=1,...,N$, but can have discontinuities in the endpoints of such intervals. We can compute the jumps in these discontinuities using \eqref{Litt1} as follows

\begin{align*}
	& \frac{1}{N}\sum_{m=1}^N 
	\pa{\widehat{g^N_m}\pa{\pa{-\frac{1}{2}+\frac{n}{N}}^+}-\widehat{g^N_m}\pa{\pa{-\frac{1}{2}+\frac{n}{N}}^-}} y^{m-1} \\
	& \quad =
	\begin{cases}
		\displaystyle \pa{\frac{N}{2\pi i}}^{N-1} \frac{(-1)^{N-1}}{(N-1)!} 
		\prod_{j=2}^N \pa{y+\pi i-\frac{2\pi i (j-1)}{N}} & \text{ for } n=0 \\
		\displaystyle \pa{\frac{N}{2\pi i}}^{N-1} \frac{(-1)^{N-n-1}}{n!(N-n-1)!}
		\prod_{\substack{j=1 \\ j\neq n+1}}^N \pa{y+\pi i-\frac{2\pi i(j-1)}{N}} \\
		\quad \displaystyle - \pa{\frac{N}{2\pi i}}^{N-1} \frac{(-1)^{N-n}}{(n-1)!(N-n)!}
		\prod_{\substack{j=1 \\ j\neq n}}^N \pa{y+\pi i-\frac{2\pi i j}{N}} & \text{ for } n=1,...,N-1 \\
		\displaystyle -\pa{\frac{N}{2\pi i}}^{N-1} \frac{(-1)^{N-N}}{(N-1)!} 
		\prod_{j=1}^{N-1} \pa{y+\pi i-\frac{2\pi i j}{N}} & \text{ for } n=N
	\end{cases} \\
	& \quad = \pa{\frac{N}{2\pi i}}^{N-1} \frac{(-1)^{N-n-1} N}{n!(N-n)!} \prod_{j=1}^{N-1} \pa{y+\pi i-\frac{2\pi i j}{N}}
	= \pa{\frac{N}{2\pi i}}^{N-1} \frac{(-1)^{N-n-1} N}{n!(N-n)!} \sum_{m=1}^N C^N_m y^{m-1}
\end{align*}
and therefore
\begin{equation}\label{Litt3}
	\widehat{g^N_m}\pa{\pa{-\frac{1}{2}+\frac{n}{N}}^+}-\widehat{g^N_m}\pa{\pa{-\frac{1}{2}+\frac{n}{N}}^-}
	= \pa{\frac{N}{2\pi i}}^{N-1} \frac{(-1)^{N-n-1} N^2}{n!(N-n)!} C^N_m,
\end{equation}
where the coefficients $C^N_m$ are the ones satisfying
$$\prod_{j=1}^{N-1} \pa{y+\pi i-\frac{2\pi i j}{N}}=\sum_{m=1}^N C^N_m y^{m-1}.$$
Combining \eqref{Litt2} and \eqref{Litt3} we obtain

\begin{align}
	& g^N_{m+1}(x)
	= \int_{-\frac{1}{2}}^\frac{1}{2} \widehat{g^N_{m+1}}(\xi) e^{2\pi i x \xi} d\xi
	= -\sum_{n=1}^N \int_{-\frac{1}{2}+\frac{n-1}{N}}^{-\frac{1}{2}+\frac{n}{N}} 
	\frac{\widehat{g^N_m}'(\xi)}{2\pi i m} e^{2\pi i x \xi} d\xi \nonumber \\
	& \quad = \frac{1}{2\pi i m}\sum_{n=1}^N \int_{-\frac{1}{2}+\frac{n-1}{N}}^{-\frac{1}{2}+\frac{n}{N}} 
	\widehat{g^N_m}(\xi) (2\pi i x) e^{2\pi i x \xi} d\xi -\frac{1}{2\pi i m}\sum_{n=1}^N 
	\widehat{g^N_m}\pa{\pa{-\frac{1}{2}+\frac{n}{N}}^-}e^{2\pi i x \pa{-\frac{1}{2}+\frac{n}{N}}}
    \nonumber \\
    & \quad \quad + \frac{1}{2\pi i m}\sum_{n=1}^N 
	\widehat{g^N_m}\pa{\pa{-\frac{1}{2}+\frac{n-1}{N}}^+}e^{2\pi i x \pa{-\frac{1}{2}+\frac{n-1}{N}}} 
    \label{Litt4} \\
	& \quad = \frac{x g^N_m(x)}{m}  + \frac{1}{2\pi i m} \sum_{n=0}^N 
	\pa{\widehat{g^N_m}\pa{\pa{-\frac{1}{2}+\frac{n}{N}}^+}
    -\widehat{g^N_m}\pa{\pa{-\frac{1}{2}+\frac{n}{N}}^-}}
	e^{2\pi i x \pa{-\frac{1}{2}+\frac{n}{N}}} \nonumber \\
	& \quad = \frac{x g^N_m(x)}{m} - \pa{\frac{N}{2\pi i}}^N \frac{C^N_m}{(N-1)! m}
    \sum_{n=0}^N {N \choose n}
	\pa{e^\frac{x\pi i}{N}}^n \pa{-e^{-\frac{x\pi i}{N}}}^{N-n} \nonumber \\
	& \quad = \frac{x g^N_m(x)}{m} - \frac{C^N_m x^N}{(N-1)! m} \pa{\sinc\pa{\frac{x}{N}}}^N, \nonumber
\end{align}
for $m=1,...,N-1$ and all $x\in\R$. Note that that from \eqref{Litt1} it can be easily deduced that
$$\frac{1}{N}\widehat{g^N_N}\pa{\frac{n-1-x}{N}}= \pa{\frac{N}{2\pi i}}^{N-1}\frac{(-1)^{N-n}}{(n-1)!(N-n)!}$$
and therefore
\begin{align*}
    g^N_N (x) &= \int_{-\frac{1}{2}}^\frac{1}{2} \widehat{g^N_N}(\xi) e^{2\pi i \xi x} d\xi 
    = \frac{1}{N} \sum_{n=1}^N \int_\frac{N-2}{2}^\frac{N}{2} \widehat{g^N_N}\pa{\frac{n-1-y}{N}} e^{2\pi i \frac{n-1-y}{N} x} dy \\
    &= \pa{\frac{N}{2\pi i}}^{N-1} \sum_{n=1}^N \frac{(-1)^{N-n} e^\frac{2\pi i (n-1) x}{N}}{(n-1)!(N-n)!} \int_\frac{N-2}{2}^\frac{N}{2} e^{-\frac{2\pi i y x}{N} } dy \\
    &= \pa{\frac{N}{2\pi i}}^{N-1} \sum_{n=1}^N \frac{(-1)^{N-n} e^\frac{2\pi i (n-1) x}{N}}{(n-1)!(N-n)!} \frac{e^{-\pi i x}-e^{-\frac{\pi i (N-2) x}{N} }}{-\frac{2\pi i x}{N}} \\
    &= \pa{\frac{N}{2\pi i}}^{N-1} \frac{e^\frac{x\pi i}{N}-e^{-\frac{x\pi i}{N}}}{\frac{2\pi i x}{N}}
    \frac{e^{-\frac{\pi i (N-1) x}{N}}}{(N-1)!}\sum_{n=1}^N {{N-1}\choose {n-1}} e^\frac{2\pi i (n-1) x}{N} (-1)^{(N-1)-(n-1)} \\
    &= \frac{x^{N-1}}{(N-1)!} \pa{\frac{e^\frac{x\pi i}{N}-e^{-\frac{x\pi i}{N}}}{\frac{2\pi i x}{N}}}^N
    =\frac{x^{N-1}}{(N-1)!} \pa{\sinc\pa{\frac{x}{N}}}^N.
\end{align*}
Combining the explicit formula for $g^N_N$ and \eqref{Litt4} we can conclude that
$$g^N_m(x)=\pa{\sum_{n=m}^N C_n^N \frac{(n-1)!}{(N-1)!} x^{N-n}} \frac{x^{m-1}}{(m-1)!} \pa{\sinc\pa{\frac{x}{N}}}^N,$$
where, as mentioned before, the coefficients $C^N_m$ are the ones satisfying
$$\prod_{j=1}^{N-1} \pa{y+\pi i-\frac{2\pi i j}{N}}=\sum_{m=1}^N C^N_m y^{m-1}.$$

\subsection{Formula depending on $\suc{f(2m+a)}_{m\in\Z}$ and $\suc{f^{(n)}(2m+b)}_{m\in\Z}$}
In this case the operators we are considering are $T_1(f)=f(\cdot+a)$ and $T_2(f)=f^{(n)}(\cdot+b)$, with their corresponding functions $K_1(x)=e^{2\pi i a x}$ and $K_2(x)=(2\pi i x)^n e^{2\pi i b x}$. Therefore, the matrix corresponding to this case is
$$
\begin{pmatrix}
    \displaystyle \frac{1}{2} e^{-\frac{2\pi i x a}{2}} 
    & \displaystyle \frac{1}{2}\pa{-\frac{2\pi i x}{2}}^n e^{-\frac{2\pi i x b}{2}} \\
    \displaystyle\frac{1}{2} e^\frac{2\pi i (1-x) a}{2} 
    & \displaystyle \frac{1}{2}\pa{\frac{2\pi i (1-x)}{2}}^n e^\frac{2\pi i (1-x) b}{2}
\end{pmatrix}=
\begin{pmatrix}
    \displaystyle \frac{1}{2} e^{-a x \pi i} 
    & \displaystyle \frac{(-\pi i)^n}{2} x^n e^{-b x \pi i} \\
    \displaystyle \frac{e^{a\pi i}}{2} e^{-a x \pi i} 
    & \displaystyle \frac{(-\pi i)^n e^{b\pi i}}{2}(x-1)^n e^{-b x \pi i}
\end{pmatrix},$$
that is invertible for all $x\in (0,1)$, if we exclude the cases when $n$ and $a-b$ are even and when $n$ and $a-b$ are odd, with inverse
$$
\begin{pmatrix}
    \displaystyle \frac{2 e^{b\pi i} (x-1)^n e^{a x \pi i}}{(x-1)^n e^{b \pi i} - x^n e^{a \pi i}} 
    & \displaystyle -\frac{2 x^n e^{a x \pi i}}{(x-1)^n e^{b \pi i} - x^n e^{a \pi i}} \\
    \displaystyle -\frac{2 e^{a\pi i} e^{b x \pi i}}{(-\pi i)^n \pa{(x-1)^n e^{b \pi i} - x^n e^{a \pi i}}} 
    & \displaystyle \frac{2 e^{b x \pi i}}{(-\pi i)^n \pa{(x-1)^n e^{b \pi i} - x^n e^{a \pi i}}}
\end{pmatrix}.$$
Therefore, if $n$ and $a-b$ are even or $n$ and $a-b$ are odd, there is no interpolation formula, but otherwise, we have that
$$f(x)=\sum_{m\in\Z}f(2m+a) g_{n,a,b}(x-2m) +\sum_{m\in\Z}f^{(n)}(2m+b) h_{n,a,b}(x-2m),$$
for all $f\in PW_\pi$, where the convergence holds both in $L^2$ sense and uniformly where
\begin{align*}
    g_{n,a,b}(x) 
    &=\frac{1}{2} \int_0^1 \widehat{g_{n,a,b}}\pa{\frac{-\xi}{2}} e^{-x \xi \pi i} d\xi 
    + \frac{1}{2} \int_0^1 \widehat{g_{n,a,b}}\pa{\frac{1-\xi}{2}} e^{x (1-\xi) \pi i} d\xi \\
    &=\int_0^1 \frac{e^{b\pi i} (\xi-1)^n e^{a \xi \pi i}}{(\xi-1)^n e^{b \pi i} - \xi^n e^{a \pi i}} e^{-x \xi \pi i} d\xi 
    -\int_0^1 \frac{\xi^n e^{a \xi \pi i}}{(\xi-1)^n e^{b \pi i} - \xi^n e^{a \pi i}} e^{x (1-\xi) \pi i} d\xi \\
    &= \int_0^1 \frac{\xi^n e^{(x-b) \pi i} - (\xi-1)^n}{\xi^n e^{(a-b) \pi i} - (\xi-1)^n } e^{-(x-a) \xi \pi i} d\xi 
\end{align*}
and
\begin{align*}
    h_{n,a,b}(x)
    &=\frac{1}{2} \int_0^1 \widehat{h_{n,a,b}}\pa{\frac{-\xi}{2}} e^{-x \xi \pi i} d\xi 
    + \frac{1}{2} \int_0^1 \widehat{h_{n,a,b}}\pa{\frac{1-\xi}{2}} e^{x (1-\xi) \pi i} d\xi \\
    &=-\int_0^1 \frac{e^{a\pi i} e^{b \xi \pi i} e^{-x \xi \pi i}}
    {(-\pi i)^n \pa{(\xi-1)^n e^{b \pi i} - \xi^n e^{a \pi i}}} d\xi 
    +\int_0^1 \frac{e^{b \xi \pi i} e^{x (1-\xi) \pi i}}{(-\pi i)^n \pa{(\xi-1)^n e^{b \pi i} - \xi^n e^{a \pi i}}} d\xi \\
    &=\frac{1}{(-\pi i)^n} \int_0^1 \frac{1-e^{(x-a)\pi i}}{\xi^n - (\xi-1)^ne^{(b-a) \pi i}} e^{-(x-b) \xi \pi i} d\xi.
\end{align*}

\subsection{Formula depending on $\suc{f(Nm+a_n)}_{m\in\Z}$, $n=1,...,N$}
These cases correspond to the operators $T_n(f)=f(\cdot+a_n)$, with their associated multipliers 
$K_n(x)=e^{2\pi i a_n x}$, therefore the matrix we are considering is
$$\pa{\frac{e^\frac{2\pi i (m-1-x) a_n}{N}}{N}}_{m,n=1}^N
= \pa{\pa{e^\frac{2\pi i a_s}{N}}^{m-1}}_{m,s=1}^N \pa{\frac{e^{-\frac{2\pi i a_n x}{N}}}{N}\delta_{n,s}}_{s,n=1}^N$$
The second of these matrices is a diagonal matrix whose elements are different from 0 for any $x$ and therefore is invertible, the first one in a Vandermonde matrix, not depending on $x$, and will be invertible if and only if $e^\frac{2\pi i a_n}{N}\neq e^\frac{2\pi i a_m}{N}$ for all $n\neq m$. Therefore, a formula only depending on $\suc{f(Nm+a_n)}_{m\in\Z}$ for $n=1,...,N$ exists if and only if $a_n-a_m\notin N\Z$ for all $n\neq m$, note that the case we are excluding is the one where at least two of the sequences of values we are considering are the same. \\

Now we will use characterization ii) to obtain an explicit interpolation formula. If we denote by $\pa{C_{n,m}}_{n,m=1}^N$ to the inverse of the first of the two matrices of the factorization made before we have that
$$Id=\pa{C_{n,m}}_{n,m=1}^N \pa{\pa{e^\frac{2\pi i a_s}{N}}^{m-1}}_{m,s=1}^N
= \pa{\sum_{m=1}^N C_{n,m} \pa{e^\frac{2\pi i a_s}{N}}^{m-1}}_{m,s=1}^N$$
and therefore
$$\sum_{m=1}^N C_{n,m} y^{m-1}
=\prod_{\substack{s=1 \\ s\neq n}}^N \frac{y-e^\frac{2\pi i a_s}{N}}{e^\frac{2\pi i a_n}{N}-e^\frac{2\pi i a_s}{N}}.$$
From this, we deduce that
$$\pa{\pa{\frac{1}{N}e^{\frac{2\pi i(m-1-x)a_n}{N}}}_{m,n=1}^N}^{-1}
=\pa{N e^\frac{2\pi i a_n x}{N}\delta_{n,s}}_{n,s=1}^N \pa{C_{s,m}}_{s,m=1}^N
=\pa{N e^\frac{2\pi i a_n x}{N} C_{n,m}}_{n,m=1}^N$$
and using characterization ii) we get
\begin{align*}
	g^a_n(x) &=\int_{-\frac{1}{2}}^\frac{1}{2} \widehat{g^a_n}(\xi) e^{2\pi i x \xi} d\xi
	= \frac{1}{N} \sum_{m=1}^N \int_\frac{N-2}{2}^\frac{N}{2}  
	\widehat{g^a_n}\pa{\frac{m-1-y}{N}} e^{2\pi i x \frac{m-1-y}{N}} dy \\
	&= \sum_{m=1}^N \int_\frac{N-2}{2}^\frac{N}{2} e^\frac{2\pi i a_n y}{N} C_{n,m} e^{2\pi i x \frac{m-1-y}{N}} dy
	= \sum_{m=1}^N C_{n,m} e^\frac{2\pi i (m-1) x}{N} 
	\frac{e^{\pi i (a_n-x)}-e^\frac{\pi i (a_n-x) (N-2)}{N}}{\frac{2\pi i (a_n-x)}{N}} \\
	&= \pa{\sum_{m=1}^N C_{n,m} \pa{e^\frac{2\pi i x}{N}}^{m-1}} e^\frac{\pi i (a_n-x)(N-1)}{N} \sinc\pa{\frac{x-a_n}{N}} \\
    &= \sinc\pa{\frac{x-a_n}{N}} e^\frac{\pi i (a_n-x)(N-1)}{N}
    \prod_{\substack{s=1 \\ s\neq n}}^N \frac{e^\frac{\pi i x}{N}}{e^\frac{\pi i a_n}{N}}
    \frac{e^\frac{(x-a_s)\pi i}{N}-e^{-\frac{(x-a_s)\pi i}{N}}}{e^\frac{(a_n-a_s)\pi i}{N}-e^{-\frac{(a_n-a_s)\pi i}{N}}} \\
    &= \sinc\pa{\frac{x-a_n}{N}} \prod_{\substack{s=1 \\ s\neq n}}^N 
    \frac{\sin\pa{\frac{\pi(x-a_s)}{N}}}{\sin\pa{\frac{\pi(a_n-a_s)}{N}}}.
\end{align*}
In conclusion if $N\in\N$ and $a_n-a_m\notin N\Z$ for all $n\neq m$ we have that
$$f(x)=\sum_{n=1}^N \sum_{m\in\Z} f(Nm+a_n) \sinc\pa{\frac{x-Nm-a_n}{N}} \prod_{\substack{s=1 \\ s\neq n}}^N \frac{\sin\pa{\frac{\pi(x-Nm-a_s)}{N}}}{\sin\pa{\frac{\pi(a_n-a_s)}{N}}},$$
for all $f\in PW_\pi$.

\subsection{Formula depending on $\suc{f(2m+a)}_{m\in\Z}$ and $\suc{\frac{f(2m+b-\epsilon)-f(2m+b+\epsilon)}{2\epsilon}}_{m\in\Z}$}
In this case the operators we are considering are $T_1(f)=f(\cdot+a)$ and $T_2(f)=\frac{f(\cdot +b-\epsilon)-f(\cdot +b+\epsilon)}{2\epsilon}$, with their corresponding functions $K_1(x)=e^{2\pi i a x}$ and $K_2(x)=\frac{e^{2\pi i (b-\epsilon) x}-e^{2\pi i (b+\epsilon) x}}{2\epsilon}=2\pi i x \ \sinc(2\epsilon x) e^{2\pi i b x}$. Therefore, the matrix associated to this case is
$$
\begin{pmatrix}
    \frac{1}{2} e^{-\frac{2\pi i x a}{2}} & \frac{1}{2}2\pi i \pa{-\frac{x}{2}}\sinc\pa{-\frac{2\epsilon x}{2}} e^{-\frac{2\pi i x b}{2}} \\
    \frac{1}{2} e^\frac{2\pi i (1-x) a}{2} & \frac{1}{2}2\pi i \pa{\frac{1-x}{2}}\sinc\pa{\frac{2\epsilon (1-x)}{2}} e^\frac{2\pi i (1-x) b}{2}
\end{pmatrix}=
\begin{pmatrix}
    \frac{1}{2} e^{-a x \pi i} & -\frac{x \pi i}{2} \sinc(\epsilon x) e^{-b x \pi i} \\
    \frac{e^{a\pi i}}{2} e^{-a x \pi i} & \frac{(1-x) \pi i e^{b\pi i}}{2} \sinc(\epsilon (1-x)) e^{-b x \pi i}
\end{pmatrix},$$
that will be invertible, with bounded inverse, if and only if the equation 
$$e^{b\pi i} (1-x)\sinc(\epsilon (1-x))+e^{a\pi i} x \ \sinc(\epsilon x)=0$$
has no solution in $x\in [0,1]$. This equation has a solution in three cases: 

1) when $a-b$ is odd, in which case $x=\frac{1}{2}$ is a solution, 

2) when $\epsilon\in\Z\setminus\{0\}$, in which cases $x=0$ and $x=1$ are solutions, 

3) when $a-b$ is even and $\abs{\epsilon}\geq 1$, in which cases $x=\frac{1}{2}-\frac{1}{2\epsilon}$ and $x=\frac{1}{2}+\frac{1}{2\epsilon}$ are solutions. 

Excluding these three cases, the inverse of the previously stated matrix is
$$
\begin{pmatrix}
    \displaystyle \frac{2 e^{a x \pi i} (1-x) \sinc(\epsilon (1-x))}{(1-x) \sinc(\epsilon (1-x))+ e^{(a-b)\pi i} x \ \sinc(\epsilon x)} 
    & \displaystyle \frac{2 e^{a x \pi i} e^{-b\pi i} x \ \sinc(\epsilon x)}{(1-x) \sinc(\epsilon (1-x))+ e^{(a-b)\pi i} x \ \sinc(\epsilon x)} \\
    \displaystyle -\frac{2 e^{b x \pi i}}{\pi i \pa{e^{(b-a)\pi i}(1-x) \sinc(\epsilon (1-x))+ x \ \sinc(\epsilon x)}} 
    & \displaystyle \frac{2 e^{b x \pi i} e^{-a\pi i}}{\pi i \pa{e^{(b-a)\pi i}(1-x) \sinc(\epsilon (1-x))+ x \ \sinc(\epsilon x)}} 
\end{pmatrix}$$
therefore, if we are not in the cases 1), 2) or 3), we have that
$$f(x)=\sum_{m\in\Z} f(2m+a) g_{\epsilon,a,b}(x-2m)+\sum_{m\in\Z} \frac{f(2m+b-\epsilon)-f(2m+b+\epsilon)}{2\epsilon} h_{\epsilon,a,b}(x-2m),$$
where
\begin{align*}
    g_{\epsilon,a,b}(x) 
    &= \frac{1}{2} \int_0^1 \widehat{g_{\epsilon,a,b}}\pa{\frac{-\xi}{2}} e^{-x \xi \pi i} d\xi + \frac{1}{2} \int_0^1 \widehat{g_{\epsilon,a,b}}\pa{\frac{1-\xi}{2}} e^{x (1-\xi) \pi i} d\xi \\
    &= \int_0^1\frac{e^{a \xi \pi i} (1-\xi) \sinc(\epsilon (1-\xi))}{(1-\xi) \sinc(\epsilon (1-\xi))+ e^{(a-b)\pi i} \xi \ \sinc(\epsilon \xi)} e^{-x \xi \pi i} d\xi \\
    & \quad + \int_0^1\frac{e^{a \xi \pi i} e^{-b\pi i} \xi \ \sinc(\epsilon \xi)}{(1-\xi) \sinc(\epsilon (1-\xi))+ e^{(a-b)\pi i} \xi \ \sinc(\epsilon \xi)} e^{x (1-\xi) \pi i} d\xi \\
    &= \int_0^1\frac{(1-\xi) \sinc(\epsilon (1-\xi))+ e^{(x-b)\pi i} \xi \ \sinc(\epsilon \xi)}{(1-\xi) \sinc(\epsilon (1-\xi))+ e^{(a-b)\pi i} \xi \ \sinc(\epsilon \xi)} e^{-(x-a) \xi \pi i} d\xi \\
    &= \int_0^1 \frac{\sin\pa{\pi \epsilon (1-\xi)} + e^{(x-b)\pi i}\sin\pa{\pi \epsilon \xi}}
    {\sin\pa{\pi \epsilon (1-\xi)} + e^{(a-b)\pi i}\sin\pa{\pi \epsilon \xi}} e^{-(x-a)\xi\pi i} d\xi
\end{align*}
and
\begin{align*}
    h_{\epsilon,a,b}(x)
    &=\frac{1}{2} \int_0^1 \widehat{h_{\epsilon,a,b}}\pa{\frac{-\xi}{2}} e^{-x \xi \pi i} d\xi + \frac{1}{2} \int_0^1 \widehat{h_{\epsilon,a,b}}\pa{\frac{1-\xi}{2}} e^{x (1-\xi) \pi i} d\xi \\
    &= -\int_0^1 \frac{e^{b \xi \pi i}}{\pi i \pa{e^{(b-a)\pi i}(1-\xi) \sinc(\epsilon (1-\xi))+ \xi \ \sinc(\epsilon \xi)}}  e^{-x \xi \pi i} d\xi \\
    & \quad + \int_0^1 \frac{e^{b \xi \pi i} e^{-a\pi i}}{\pi i \pa{e^{(b-a)\pi i} (1-\xi) \sinc(\epsilon (1-\xi))+ \xi \ \sinc(\epsilon \xi)}} e^{x (1-\xi) \pi i} d\xi \\
    &= \int_0^1 \frac{e^{(x-a)\pi i}-1}{\pi i \pa{e^{(b-a)\pi i}(1-\xi) \sinc(\epsilon (1-\xi))+ \xi \ \sinc(\epsilon \xi)}}  e^{-(x-b) \xi \pi i} d\xi \\
    &= \int_0^1 \frac{\epsilon \pa{e^{(x-a)\pi i}-1}}{i\pa{e^{(b-a)\pi i}\sin\pa{\pi \epsilon (1-\xi)}-\sin\pa{\pi \epsilon \xi}}}
    e^{-(x-b)\xi \pi i} d\xi.
\end{align*}
Note that, in the three excluded cases, the solutions to the equations are separated points, and therefore a set of measure zero. Therefore, using Theorem \ref{theo2}, we deduce that in any of these cases we have 
$$\norm{f}^2_{L^2(\R)}\lesssim \sum_{m\in\Z} \abs{f(2m+a)}^2 
+ \sum_{m\in\Z}\abs{\frac{f(2m+b-\epsilon)-f(2m+b+\epsilon)}{2\epsilon}}^2 
\lesssim \norm{f}^2_{L^2(\R)}.$$
This can be very easily seen in the case $a=0=b$ and $\epsilon=1$ where, by condition 1) or 2), the interpolation formula does not hold, despite this we have that
$$
\begin{cases}
    f\in PW_\pi \\
    f(2m)=0, \ \forall m\in\Z \\
    \frac{f(2m-1)-f(2m+1)}{2}=0, \ \forall m\in\Z
\end{cases} \implies
\begin{cases}
    f\in PW_\pi \\
    f(m)=0, \ \forall m\in\Z
\end{cases} \implies f=0.$$


\section*{Acknowledgements} The authors are thankful for Carlos Cabrelli for discussions that led to this project. The authors are supported by the project PID2023-146646NB-I00 funded by MICIU/AEI/10.13039/501100011033 and by ESF+, the Basque Government through the BERC 2022-2025 program, and through BCAM Severo Ochoa accreditation CEX2021-001142-S / MICIN / AEI / 10.13039/501100011033. The first author is also suported by the grant PRE2020-094897 from the Spanish Ministry of
Science and Innovation. The second author is also supported by the grant RYC2022-038226-I.

\end{document}